\numberwithin{equation}{section}
\newtheorem{theorem}{Theorem}
\newtheorem{lemma}{Lemma}[section]
\newtheorem{remark}[lemma]{Remark}
\newtheorem{notation}[lemma]{Notation}
\renewcommand{\leq}{\leqslant}
\renewcommand{\geq}{\geqslant}
\newcommand{\Rot}{\mathcal{R}}
\newcommand{\ChEps}{\varrho}
\newcommand*{\IfItalic}{%
  \ifx\f@shape\my@test@it
    \expandafter\@firstoftwo
  \else
    \expandafter\@secondoftwo
  \fi
}
\newcommand*{\my@test@it}{it}
\newcommand{\myae}{\IfItalic{\emph{\mbox{\ae}}}{\mbox{\ae}}}
\newcommand\lI{L}
\newcommand\ssemg\psi
\newcommand\fkinkg\gamma
\newcommand\iks[1]{\widehat{x}_{#1}}
\newcommand\iksp[1]{\widehat{x}_{#1}^{+}}
\newcommand\ikspm[1]{\widehat{x}_{#1}^{\, \pm}}
\newcommand\iksm[1]{\widehat{x}_{#1}^{\, -}}
\newcommand\iksmp[1]{\widehat{x}_{#1}^{\, \mp}}
\newcommand\dGt{
\beta}
\begin{document}

\begin{center}
{\Large Infinite sequence on 2-element alphabet\\
as coordinates of points, determines by unimodal maps}

{\large Makar Plakhotnyk\\
University of S\~ao Paulo, Brazil.\\

makar.plakhotnyk@gmail.com}\\
\end{center}

\begin{abstract}
We study in this work different $(0, 1)$-codings of points from
the unit interval $[0, 1]$ in the relation with the treatment of
continuous unimodal maps.~\footnote{This work is partially
supported by FAPESP (S\~ao Paulo, Brazil).}~\footnote{AMS subject
classification:
37E05  
}~\footnote{Key words: one-dimensional dynamics, invariant
coordinates}
\end{abstract}

\section{Introduction}

The final aim of the theory of the motions of dynamical systems
must be directed to toward the qualitative determination of all
possible types of motions and of the interrelation of these
motions~\cite[p. 189]{Birk-1927}. Topological conjugation is the
classical tool to divide the dynamical systems to the classes of
equivalence, where all the trajectories are the same in a certain
cense.

We will call a continuous map $g: [0, 1]\rightarrow [0, 1]$
\textbf{unimodal}, if it can be written in the form
\begin{equation}\label{eq:1.1} g(x) = \left\{
\begin{array}{ll}g_l(x),& 0\leq x\leq
v,\\
g_r(x), & v\leq x\leq 1,
\end{array}\right.
\end{equation} where %
$v\in (0,\, 1)$ is a parameter, the function $g_l$ is increasing,
the function $g_r$ is decreasing, and $$g(0)=g(1)=1-g(v)=0.$$

If a homeomorphism $h$ satisfies the functional equation
$$h\circ g_1 = g_2\circ h,
$$ %
where $g_1$ and $g_2$ are unimodal maps, then we will say that $h$
is a conjugation from $g_1$ to $g_2$.

For a fixed unimodal map $g:\, [0, 1]\rightarrow [0, 1]$ and for
each point $x\in [0, 1]$ we will construct different $(0, 1)$
sequences, and each of them will be considered as coordinates of
$x$ in some cense. These sequences will be invariant with respect
to topological conjugateness of unimodal maps, and we will use
them to construct the explicit formula of the topological
conjugacy. The usefulness of such approach is well known in the
one-dimensional dynamics. It comes from~\cite{Milnor-Thurston} by
J. Milnor and W.~Thurston (1988), and is widely used in the
Kneading theory, created by them.

Let $g$ be unimodal map. For any $t\in [0, 1]$ write
$\varepsilon(t)=1$ if $t<v$, $\varepsilon(v)=0$, and
$\varepsilon(t)=-1$ if $t>v$. Now for any $x\in [0, 1]$ denote $$
\theta_n = \cdot\prod\limits_{i=0}^{n}\varepsilon(g^n(x)).
$$ Following~\cite{Milnor-Thurston}, the sequence $$
x \longleftrightarrow \theta(x) =\theta_0\theta_1\ldots
\theta_n\ldots
$$ is %
called the \textbf{invariant coordinates} of $x$ (with respect to
$g$).

\begin{lemma}\cite[p. 478, Lemma~3.1]{Milnor-Thurston}
The map $x\mapsto \theta(x)$ is increasing, where the natural
lexicographical order is defined on the set of sequences
$\{(\theta_i)_{i\geq 0}\}$ of elements $\{-1; 0; 1\}$.
\end{lemma}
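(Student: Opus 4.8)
The plan is to compare the itineraries of two points $x<y$ and to show that their lexicographic comparison reproduces the order of $[0,1]$. Everything rests on one orientation bookkeeping device: for $n\geq 1$ put $\sigma_n(x)=\prod_{i=0}^{n-1}\varepsilon(g^i(x))$ and $\sigma_0(x)=1$, so that $\theta_{n-1}(x)=\sigma_n(x)$ and $\theta_n(x)=\sigma_n(x)\,\varepsilon(g^n(x))$. The quantity $\sigma_n(x)$ is intended to record whether the branch of $g^n$ carrying $x$ is orientation preserving ($+1$) or orientation reversing ($-1$); this reflects that $g$ increases on $[0,v]$, where $\varepsilon=+1$, and decreases on $[v,1]$, where $\varepsilon=-1$.

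First I would prove, by induction on $n$, the following monotonicity statement. Suppose that for every $i$ with $0\leq i\leq n-1$ the points $g^i(x)$ and $g^i(y)$ lie strictly on the same side of $v$. Then $\sigma_n(x)=\sigma_n(y)$, and moreover $g^n(x)<g^n(y)$ when this common value is $+1$, while $g^n(x)>g^n(y)$ when it is $-1$. The base case $n=0$ is just $x<y$ together with $\sigma_0=1$. For the inductive step one applies $g$ to the pair $g^n(x),g^n(y)$: if both lie in $[0,v]$ the increasing branch preserves their order and multiplies $\sigma$ by $+1$, whereas if both lie in $[v,1]$ the decreasing branch reverses their order and multiplies $\sigma$ by $-1$; in either case the asserted correspondence between the sign of $\sigma_{n+1}$ and the order of $g^{n+1}(x),g^{n+1}(y)$ is maintained.

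Next, let $k$ be the least index at which $g^k(x)$ and $g^k(y)$ fail to lie strictly on the same side of $v$ (either they are separated by $v$, or one of them equals $v$). For $i<k$ the addresses agree, $\varepsilon(g^i(x))=\varepsilon(g^i(y))$, so $\theta_i(x)=\theta_i(y)$ and the two sequences coincide up to index $k-1$; thus the lexicographic comparison is decided at index $k$. Applying the induction above with $n=k$ gives a common factor $\sigma_k=\theta_{k-1}\in\{-1,+1\}$ (it cannot vanish, since no $g^i(x)$ equals $v$ for $i<k$), together with the forced order of $g^k(x)$ and $g^k(y)$. Writing $\theta_k(x)=\sigma_k\,\varepsilon(g^k(x))$ and $\theta_k(y)=\sigma_k\,\varepsilon(g^k(y))$, I would then run through the few possible positions of $g^k(x),g^k(y)$ relative to $v$ that are consistent with the order dictated by $\sigma_k$, and verify that the factor $\sigma_k$ exactly cancels the orientation, so that $\theta_k(x)$ and $\theta_k(y)$ compare in accordance with $x<y$; this yields the monotonicity (increasing, for the order fixed on the sequences) of $x\mapsto\theta(x)$.

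Two edge cases remain. If the orbits of $x$ and $y$ are never separated by $v$, then all addresses coincide and $\theta(x)=\theta(y)$, which is consistent with non-strict monotonicity. If some iterate lands exactly on $v$, the corresponding coordinate is $0$ and, since $g(v)=1$ forces every later coordinate to vanish as well, the comparison is again settled at the first such index; this boundary value is the single place where care is needed. Indeed, the main obstacle throughout is precisely the orientation reversal on $[v,1]$: the raw addresses $\varepsilon(g^i(\cdot))$ do not compare correctly under plain lexicographic order, and it is the passage to the cumulative products $\theta_i$ — which absorb the accumulated parity of orientation reversals into the sign $\sigma_k$ — that makes the lexicographic order on the $\theta$-sequences agree with the order on $[0,1]$.
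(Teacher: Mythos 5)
The paper offers no proof of this lemma at all --- it is quoted verbatim (up to conventions) from Milnor--Thurston --- so your attempt has to be judged on its own merits. Your scaffolding is the standard route and is sound as far as it goes: the cumulative sign $\sigma_n(x)=\prod_{i=0}^{n-1}\varepsilon(g^i(x))$, the induction showing that if the first $n$ addresses of $x<y$ agree strictly then $\sigma_n(x)=\sigma_n(y)$ and the order of $g^n(x),g^n(y)$ is preserved or reversed according to that common sign, the reduction to the first index $k$ of disagreement, and the separate treatment of an iterate landing on $v$.

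The gap is that the one step carrying the whole weight --- the case check at index $k$ --- is only promised (``I would then run through the few possible positions \ldots and verify''), and with the conventions fixed in this paper it fails. Carry it out: if $\sigma_k=+1$, then $g^k(x)<g^k(y)$, and e.g.\ $g^k(x)<v<g^k(y)$ gives $\theta_k(x)=\sigma_k\varepsilon(g^k(x))=+1$ while $\theta_k(y)=-1$; if $\sigma_k=-1$, then $g^k(x)>g^k(y)$ and $\theta_k(x)=(-1)(-1)=+1$ while $\theta_k(y)=(-1)(+1)=-1$; the cases where one iterate equals $v$ behave the same way. In every case $\theta_k(x)>\theta_k(y)$ for $x<y$, so under the natural order $-1<0<1$ the map $x\mapsto\theta(x)$, as defined here with $\varepsilon=+1$ to the left of $v$, is \emph{decreasing}, not increasing --- concretely, for the tent map and $x=0.3<y=0.4$ one computes $\theta(x)=(1,-1,1,\ldots)>(1,-1,-1,\ldots)=\theta(y)$. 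The discrepancy comes from folding Milnor--Thurston's formal address symbols (ordered $I_1<I_2$ and multiplied by the accumulated sign) into $\pm 1$ with the \emph{left} lap sent to $+1$, which reverses the address order; your argument closes only if you flip the order on symbols (take $1<0<-1$), or use order-preserving address values, or conclude ``decreasing.'' As written, the assertion that ``the factor $\sigma_k$ exactly cancels the orientation so that $\theta_k(x)$ and $\theta_k(y)$ compare in accordance with $x<y$'' is precisely the false step. A minor further slip: once some $g^k(x)=v$, the later coordinates vanish because the factor $\varepsilon(v)=0$ remains in every product $\theta_n$, $n\geq k$, not because $g(v)=1$.
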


The classical example of the unimodal maps is the \textbf{tent}
map $$ f:\, x\mapsto 1-|1-2x|.
$$

\begin{theorem}\label{th:1}\cite[p. 53]{Ulam-1964-b} %
A unimodal map $g$ is topologically conjugated to the tent $f$ map
if and only if the complete pre-image of $0$ under the action of
$g$ is dense in $[0,\, 1]$.
\end{theorem}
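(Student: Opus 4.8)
The plan is to construct the conjugacy explicitly from the invariant coordinates of Milnor and Thurston, reducing the equivalence to two properties of the map $x\mapsto\theta(x)$: its injectivity and the exact description of its image. Write $\theta_g$ and $\theta_f$ for the invariant coordinates taken with respect to $g$ and to the tent map $f$, and recall that by the lemma above both are monotone for the lexicographic order. If one can show that each of $\theta_g,\theta_f$ is a strictly increasing bijection of $[0,1]$ onto one and the same set $\Sigma$ of sequences, then the only possible conjugacy is
\[
h=\theta_f^{-1}\circ\theta_g,
\]
and, being a monotone bijection of $[0,1]$ onto itself, such an $h$ is automatically a homeomorphism. Thus the whole statement comes down to injectivity, coincidence of images, and the functional equation.

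The necessity is short. Suppose $h\circ g=f\circ h$ for a homeomorphism $h$. From $h(g(0))=f(h(0))$ the point $h(0)$ is a fixed point of $f$; the fixed points of $f$ are $0$ and $2/3$, and since $h$ permutes the endpoints we get $h(0)=0$, $h(1)=1$, so $h$ is increasing. Iterating the functional equation gives $h\circ g^{n}=f^{n}\circ h$, hence $x\in g^{-n}(0)$ if and only if $h(x)\in f^{-n}(0)$. Therefore $h$ maps the complete pre-image of $0$ under $g$ bijectively onto the complete pre-image of $0$ under $f$, which is the dense set of dyadic rationals; as $h^{-1}$ is continuous, the complete pre-image of $0$ under $g$ is dense as well.

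For the sufficiency I would first replace $0$ by the turning point. Using $g(v)=1$ and $g(1)=0$ one verifies the identity
\[
\bigcup_{n\geq 0}g^{-n}(0)=\{0,1\}\cup\bigcup_{n\geq 0}g^{-n}(v),
\]
so density of the complete pre-image of $0$ is equivalent to density of the complete pre-image of $v$. The key point is then that $\theta_g$ can fail to be injective only on a \textbf{homterval}, an interval $J$ no iterate of which meets $v$: indeed $\varepsilon$, and with it some coordinate $\theta_n$, changes value precisely when its argument crosses a point of $\bigcup_n g^{-n}(v)$. Consequently the interior of any interval on which $\theta_g$ is constant is disjoint from $\bigcup_n g^{-n}(v)$, and density of the latter set forces $\theta_g$ to be strictly increasing. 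Applying the same reasoning to $f$, whose complete pre-image of $1/2$ is dense, shows $\theta_f$ is strictly increasing too.

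It remains to identify the images and to check the functional equation. For \emph{any} unimodal map in the present sense one has $g(v)=1$, $g^{2}(v)=g(1)=0$ and $g^{n}(v)=0$ for $n\geq 2$, so the itinerary of the critical value is the same for $g$ as for $f$; since the image $\Sigma$ of $\theta$ is determined by this kneading data, $\theta_g$ and $\theta_f$ have the same image, and $h=\theta_f^{-1}\circ\theta_g$ is an increasing bijection of $[0,1]$ with $h(v)=1/2$. Finally, a direct computation from $\theta_n=\prod_{i=0}^{n}\varepsilon(g^{i}(\cdot))$ gives $\theta(g(x))=\varepsilon(x)\,\sigma(\theta(x))$, where $\sigma$ is the shift; because $h$ is increasing with $h(v)=1/2$, the sign of $x$ relative to $v$ agrees with the sign of $h(x)$ relative to $1/2$, and applying $\theta_f$ to both sides of the desired identity $h\circ g=f\circ h$ reduces it, via injectivity of $\theta_f$, to this single shift relation. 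I expect the main obstacle to lie exactly in the sufficiency, and inside it in the step from density of pre-images to strict monotonicity of $\theta_g$: one has to argue rigorously that a maximal interval of constancy is genuinely a homterval, that each branch $g^{n}|_{J}$ is then monotone, and that such an interval must nevertheless contain a pre-image of $v$ in its interior, contradicting the density hypothesis.
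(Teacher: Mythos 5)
Your necessity direction is complete and correct, and your overall route through Milnor--Thurston invariant coordinates is genuinely different from the proof the paper points to. The paper does not reprove Theorem~\ref{th:1}: it cites Ulam and recalls (via Notation~2.1 and the lemma quoted from the author's earlier work) the construction underlying the original argument, in which the conjugacy is built directly on the pre-image lattice of $0$, by sending $\mu_{n,k}(g)$ to $\mu_{n,k}(f)=k\cdot 2^{1-n}$ --- an order isomorphism from $\bigcup_{n}g^{-n}(0)$ onto the dyadic rationals --- and then extending by monotone continuity; density of $\bigcup_n g^{-n}(0)$ is exactly what makes the extension a well-defined homeomorphism, and the functional equation is checked on the dense set and propagated by continuity. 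That construction works with finite ordered point sets and never has to analyze a sequence space, which lets it sidestep the problem your version runs into.

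The genuine gap in your sufficiency half is not where you placed it. The step you flag --- from density of $\bigcup_n g^{-n}(v)$ to strict monotonicity of $\theta_g$ --- is fine and completable as you sketch, since the branches of $g$ are strictly monotone (this is implicit in the paper, where $g^{-n}(0)$ has exactly $2^{n-1}+1$ points), so the set $\{x:\,g^i(x)=v\text{ for some }i\leq n\}$ is finite and $\theta_n$ cannot vanish identically on an interval. What is missing is your one-sentence claim that $\theta_g$ and $\theta_f$ have the same image because ``the image is determined by the kneading data.'' This is not a formality: $\theta_g$ is discontinuous at every point whose orbit hits $v$ (there $\theta$ is eventually zero while its one-sided limits are never-zero sequences), so the image has countably many lexicographic gaps, and $h=\theta_f^{-1}\circ\theta_g$ is well defined only if the gaps of $\theta_g$ match those of $\theta_f$ exactly. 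For instance the sequence $(1,-1,-1,-1,\dots)$ is attained by neither map --- but proving it is not attained by $g$ already uses the density hypothesis (for a general unimodal $g$ with a fixed point of $g_l$ in $(0,v)$, which density excludes, this sequence \emph{is} attained). Showing that a sequence is attained by $g$ if and only if it is attained by $f$ requires a nested pullback over the two full branches together with an analysis of the one-sided limits at pre-images of $v$, and this second use of density is essentially the heart of the theorem; until that lemma is supplied, your $h$ is not even defined. A minor further point: the shift relation $\theta(g(x))=\varepsilon(x)\,\sigma(\theta(x))$ fails at $x=v$ (the left side is the nonzero coordinate sequence of $1$), so the functional equation at $v$, where $h(g(v))=h(1)=1=f(1/2)=f(h(v))$, must be checked separately.
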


Recall, that the set $g^{-\infty}(a) = \bigcup\limits_{n\geq
1}g^{-n}(a)$, where $g^{-n}(a) = \left\{ x\in [0,\, 1]:\, g^n(x) =
a\right\}$ for all $n\geq 1$, is called the complete pre-image of
$a$ (under the action of the map $g$). We have shown in details
in~\cite{Chaos} that the original proof of Theorem~\ref{th:1} uses
the next construction.

\begin{notation}\cite[Notation~2.1]{Chaos}
Let $g$ be unimodal map. Then the set $g^{-n}(0)$ consists of
$2^{n-1}+1$ points. Thus, for every $n\geq 1$ denote $\{
\mu_{n,k}(g),\, 0\leq k\leq 2^{n-1}\}$ such that
$g^n(\mu_{n,k}(g))=0$ and $\mu_{n,k}(g)<\mu_{n,k+1}(g)$ for all
$k$.
\end{notation}

\begin{lemma}\cite[Lemma 3]{Chaos}
Suppose that $g_1,\, g_2:\, [0,\, 1]\rightarrow [0,\, 1]$ are
unimodal maps, and $h$ is the conjugacy from $g_1$ to $g_2$. Then
$$
h(\mu_{n,k}(g_1)) =\mu_{n,k}(g_2) $$ for all $n\geq 1$ and $k,\,
0\leq k\leq 2^{n-1}$.
\end{lemma}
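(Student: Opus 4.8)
The plan is to exploit three facts about the conjugacy $h$: that it intertwines the iterates $g_1^n$ and $g_2^n$, that it is an order-preserving homeomorphism of $[0,1]$, and that both complete pre-images $g_1^{-n}(0)$ and $g_2^{-n}(0)$ consist of exactly $2^{n-1}+1$ points (the Notation above). Combining these, I would show that $h$ restricts to an increasing bijection between the two ordered finite sets, which forces it to match the points index by index.

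First I would record that the conjugacy iterates: from $h\circ g_1 = g_2\circ h$ a straightforward induction on $n$ gives $h\circ g_1^n = g_2^n\circ h$ for every $n\geq 1$. Next I would pin down that $h$ is increasing. Being a homeomorphism of $[0,1]$, $h$ is strictly monotone and permutes the endpoints $\{0,1\}$. The conjugacy relation at $x=0$ yields $g_2(h(0)) = h(g_1(0)) = h(0)$, so $h(0)$ is a fixed point of $g_2$; since $g_2(1)=0\neq 1$ while $g_2(0)=0$, the value $h(0)=1$ is impossible, whence $h(0)=0$, $h(1)=1$, and $h$ is increasing.

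With these in hand, I would show $h\big(g_1^{-n}(0)\big)=g_2^{-n}(0)$. If $g_1^n(x)=0$ then $g_2^n(h(x)) = h(g_1^n(x)) = h(0) = 0$, so $h$ carries $g_1^{-n}(0)$ into $g_2^{-n}(0)$; applying the same argument to the conjugacy $h^{-1}$ from $g_2$ to $g_1$ gives the reverse inclusion, so $h$ is a bijection between these two sets. Finally, since $h$ is strictly increasing, it sends the increasing enumeration $\mu_{n,0}(g_1)<\dots<\mu_{n,2^{n-1}}(g_1)$ to an increasing sequence inside $g_2^{-n}(0)$; as both sets have the same cardinality $2^{n-1}+1$, this increasing image must be the full ordered list $\mu_{n,0}(g_2)<\dots<\mu_{n,2^{n-1}}(g_2)$, giving $h(\mu_{n,k}(g_1))=\mu_{n,k}(g_2)$ for each $k$.

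The argument is essentially a bookkeeping of order and cardinality, so I do not expect a serious obstacle; the one point requiring care is the determination of the direction of monotonicity of $h$, which is exactly what rules out any reindexing of the pre-image points. Everything else follows from the iterated conjugacy identity together with the fact that a strictly increasing bijection between two equinumerous finite chains is automatically an order isomorphism.
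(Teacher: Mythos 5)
Your proof is correct and complete. The paper itself states this lemma without proof, merely citing it (as Lemma~3 of the author's earlier paper), so there is no in-text argument to compare against; your route --- deducing $h(0)=0$ from the fixed-point relation $g_2(h(0))=h(0)$ together with $g_2(1)=0$, hence that $h$ is increasing, then iterating the conjugacy to get $h\left(g_1^{-n}(0)\right)=g_2^{-n}(0)$ (with the reverse inclusion via $h^{-1}$), and finally matching the two ordered $(2^{n-1}+1)$-point sets index by index --- is the standard argument and supplies exactly the details the citation leaves implicit, including the easily glossed-over step that the direction of monotonicity of $h$ is forced.
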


We have studied the properties of the sequence $\mu_{n,k}(g)$ in
our~\cite{Chaos}.

\begin{notation}\cite[Notation~4.4]{Chaos}
For every $n\geq 1$ denote by $k_n$ the maximal $k\in \{ 0,\ldots,
2^{n-1}-1\}$ such that $x\in [\mu_{n+1,k}, \mu_{n+1,k+1}]$. For
every $n\geq 1$ and $k,\, 0\leq k\leq 2^{n-1}-1$ denote $\lI_{n,k}
= \mu_{n,k+1} -\mu_{n,k}$. Write $\iks{n} = \mu_{n+1,k_n}$.
\end{notation}

\begin{lemma}\cite[Lemma~7]{Chaos}
For every $x\in [0, 1]$ there exists a sequence $(\dGt_i)_{i\geq
0}$ of $\dGt_i\in \{0; 1\}$ with $\dGt_0=0$, such that $k_n =
\sum\limits_{i=1}^{n}\dGt_i2^{n-i}$ for all $n\geq 1$.
\end{lemma}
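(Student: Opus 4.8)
The plan is to produce the digits inductively, by showing that the sequence $(k_n)_{n\geq 1}$ obeys the doubling recursion $k_{n+1}=2k_n+\beta_{n+1}$ for some $\beta_{n+1}\in\{0;1\}$; setting $\beta_0=0$ and unfolding this recursion gives at once $k_n=\sum_{i=1}^{n}\beta_i2^{n-i}$. Because $g(0)=0$ we have $g^{-(n+1)}(0)\subseteq g^{-(n+2)}(0)$, so the partitions of $[0,1]$ by the points $\mu_{n+1,\bullet}$ and $\mu_{n+2,\bullet}$ are nested; both being labelled in increasing order, the recursion is equivalent to the purely geometric statement that each gap $[\mu_{n+1,k},\mu_{n+1,k+1}]$ of the coarser partition is cut into exactly two gaps of the finer one. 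Indeed $\mu_{n+1,k}=\mu_{n+2,2k}$, so such a gap equals the union of the finer gaps numbered $2k$ and $2k+1$, and $\beta_{n+1}$ merely records which half contains $x$; the ``maximal $k$'' convention keeps this choice consistent between levels (for example, if $x=\mu_{n+1,k}$ with $k<2^n$ then $x=\mu_{n+2,2k}$ and $k_{n+1}=2k_n$).

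So the heart of the matter is to show that the open gap $(\mu_{n+1,k},\mu_{n+1,k+1})$ contains exactly one point of $g^{-(n+2)}(0)$. I would deduce this from the following claim, proved by induction on $m$: on each gap $[\mu_{m,k},\mu_{m,k+1}]$ the iterate $g^m$ is a single hump, that is, it increases from $0$ to $1$ and then decreases back to $0$, attaining the value $1$ exactly once in the interior and the value $0$ only at the endpoints. Granting the claim for $m=n+1$, a point $y$ lies in $g^{-(n+2)}(0)$ precisely when $g^{n+1}(y)\in g^{-1}(0)=\{0;1\}$; inside the open gap the value $0$ never occurs and the value $1$ occurs exactly once, at the peak of the hump, so there is one and only one new point, namely $\mu_{n+2,2k+1}$. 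This is the required splitting into two.

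For the claim itself, the case $m=1$ is just the definition of a unimodal map on $[\mu_{1,0},\mu_{1,1}]=[0,1]$. For the step from $m$ to $m+1$ I would use that $v=\mu_{m+1,2^{m-1}}$ is always a partition point, since $g^{m+1}(v)=g^m(1)=0$, so that no gap straddles $v$, and then analyse $g^{m+1}=g^m\circ g$ separately on $[0,v]$ and on $[v,1]$. On $[0,v]$ the branch $g_l$ is an increasing homeomorphism onto $[0,1]$ carrying the finer partition points to the coarser ones, so $g^{m+1}=g^m\circ g_l$ turns each single hump of $g^m$ (inductive hypothesis) into a single hump; on $[v,1]$ the decreasing branch $g_r$ does the same up to an orientation reversal, which again sends single humps to single humps. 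Concatenating over all gaps yields the claim for $m+1$.

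The main obstacle is this claim, and inside it the bookkeeping of the two inverse branches: one must check that $g_r$ maps the coarser partition onto the right half $[v,1]$ in reversed order, and that the humps on $[0,v]$ and on $[v,1]$ do not merge into one longer lap --- which is exactly what the vanishing $g^{m+1}(v)=0$ guarantees. Once the single-hump structure is in hand, setting $\beta_{n+1}=k_{n+1}-2k_n$, verifying $\beta_{n+1}\in\{0;1\}$, and checking the boundary cases $x\in g^{-(n+1)}(0)$ (in particular $x=1$, where every $\beta_i=1$) are all routine.
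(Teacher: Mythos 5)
Your argument is correct: nestedness of the preimage sets (from $g(0)=0$), plus the single-hump structure of $g^{m}$ on each lap $[\mu_{m,k},\mu_{m,k+1}]$ proved by induction through $g^{m+1}=g^{m}\circ g$ on the two monotone branches, yields $\mu_{n+1,k}=\mu_{n+2,2k}$ with exactly one new preimage $\mu_{n+2,2k+1}$ per gap, hence $k_{n+1}=2k_n+\dGt_{n+1}$ with $\dGt_{n+1}\in\{0;1\}$, which unfolds to the stated formula. The paper only imports this lemma from \cite{Chaos} without reproducing its proof, but your doubling-of-partitions argument is exactly the splitting structure the paper presupposes throughout (e.g.\ $\ikspm{n}=\mu_{n+2,2k_n+1}$ in Notation~\ref{not:2.1} and Remark~\ref{rem:2.8}), so this is essentially the intended approach; your care with the endpoint convention is warranted, since the paper states it inconsistently (``maximal $k$'' in the Introduction versus ``minimal'' in Notation~\ref{not:2.1}), though either choice supports the recursion.
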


We call the sequence $(\dGt_i)_{i\geq 0}$ the
\textbf{$g$-decomposition} of $x$.

\begin{remark}
If an unimodal map $g$ is the tent map $f$, then the
$f$-decomposition of any $x\in [0, 1]$ equals the classical binary
decomposition of $x$.
\end{remark}

Another example of the use of $(0, 1)$ sequences as coordinates of
points, determined by unimodal maps, appeared
in~\cite{Yong-Guo-Wang} in the study of the properties of the
topological conjugacy of unimodal maps in the case, when graphs of
both $g_l$ and $g_r$ in~\eqref{eq:1.1} are line segments, i.e. $g$
is of the form \begin{equation}\label{eq:1.2} f_v(x) =
\left\{\begin{array}{ll}
\frac{x}{v},& \text{if }0\leq x\leqslant v,\\
 \frac{1-x}{1-v},& \text{if }
v\leq x\leq 1.
\end{array}\right.
\end{equation}
The topological conjugation of maps of the form~\eqref{eq:1.2} are
studied since early 1980-th, see~\cite{Jydy}, \cite{Skufca}
and~\cite{Yong-Guo-Wang}. Following~\cite{Skufca}
and~\cite{Yong-Guo-Wang} we will call~\eqref{eq:1.2} \textbf{skew
tent map}. It was proved in~\cite{Jydy} that for every $v_1,\,
v_2\in (0, 1)$ the maps $f_{v_1}$ and $f_{v_2}$ are topologically
conjugated. For any $x\in [0, 1]$ and unimodal map $g$ denote
$$\ChEps_n(g, x) = \left\{\begin{array}{ll} 0& \text{if }
g^{n-1}(x)\leq
\mu_{2,1}(g),\\
1& \text{otherwise}
\end{array}\right.$$ %
for all $n\geq 1$. Following~\cite{Yong-Guo-Wang}, we will call
the sequence $(\ChEps_n(g, x))_{n\geq 1}$ the \textbf{$g$-digit
sequence of~$x$, with respect to $g$}. Notice, that the elements
of $g$-digit sequence are denoted by $\varepsilon$-s
in~\cite{Yong-Guo-Wang}, but we have changed this notation to
avoid the confusion with $\varepsilon$ in the construction of
Minor-Thurston's invariant coordinates. The next theorem was
stated in~\cite{Yong-Guo-Wang}.

\begin{theorem}\cite[Theorem~2]{Yong-Guo-Wang}\label{th:2}
Let $x\in [0, 1]$, let $v_1,\, v_2\in (0, 1)$ be arbitrary, and
$h$ be the conjugacy from $f_{v_1}$ to $f_{v_2}$. Denote by
$(\ChEps_n)_{n\geq 1}$ the $f_{v_1}$-digit sequence of $x$. Then:

\noindent (1) The equality
$$
x = \ChEps_1 +\sum\limits_{n=2}^\infty \ChEps_n\cdot
v_1^{n-1}\cdot\left(\frac{v_1-1}{v_1}\right)^{\sum\limits_{j=1}^{n-1}
\ChEps_j} $$ holds.

\noindent (2) The $f_{v_2}$-digit sequence of $h(x)$ is
$(\ChEps_n)_{n\geq 1}$.

\noindent (3) The equality
$$
h(x) = \ChEps_1 +\sum\limits_{n=2}^\infty \ChEps_n\cdot
v_2^{n-1}\cdot
\left(\frac{v_2-1}{v_2}\right)^{\sum\limits_{j=1}^{n-1} \ChEps_j}
$$ holds.
\end{theorem}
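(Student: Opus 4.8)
The plan is to interpret the $f_{v_1}$-digit sequence of $x$ as a record of which branch of $f_{v_1}$ each forward iterate of $x$ falls into, and then to recover $x$ by composing the corresponding inverse branches. First I would identify $\mu_{2,1}(f_{v_1})$: since $f_{v_1}^{-1}(0)=\{0,1\}$ and $f_{v_1}(v_1)=1$, we get $f_{v_1}^{-2}(0)=\{0,v_1,1\}$, so $\mu_{2,1}(f_{v_1})=v_1$. Consequently $\ChEps_n=0$ precisely when $f_{v_1}^{\,n-1}(x)$ lies in the increasing branch $[0,v_1]$ and $\ChEps_n=1$ when it lies in the decreasing branch $[v_1,1]$. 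Writing $x_n=f_{v_1}^{\,n}(x)$, the relation $x_n=f_{v_1}(x_{n-1})$ inverts branch by branch to $x_{n-1}=A_{\ChEps_n}(x_n)$, where $A_0(y)=v_1\,y$ and $A_1(y)=1-(1-v_1)y$; a direct check, including the boundary value $x_{n-1}=v_1$, shows this identity holds for every $n\geq 1$.

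Iterating the identity yields $x=A_{\ChEps_1}\circ A_{\ChEps_2}\circ\cdots\circ A_{\ChEps_N}(x_N)$ for every $N$. The maps $A_0,A_1$ are affine with slopes $v_1$ and $v_1-1$, each of modulus less than $1$, so the composition $A_{\ChEps_1}\circ\cdots\circ A_{\ChEps_N}$ is affine with slope $\prod_{i=1}^{N}a_{\ChEps_i}$, where $a_0=v_1$ and $a_1=v_1-1$; this slope tends to $0$ as $N\to\infty$. Since $x_N\in[0,1]$ stays bounded, the slope term vanishes in the limit and $x$ equals the limiting constant term. Because the constant term of $A_j$ is $0$ for $j=0$ and $1$ for $j=1$, i.e. equals $\ChEps_k$ in the $k$-th slot, the constant term of the composition is $\sum_{k\geq 1}\ChEps_k\prod_{i=1}^{k-1}a_{\ChEps_i}$. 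Counting factors, $\prod_{i=1}^{k-1}a_{\ChEps_i}=v_1^{\,k-1}\bigl(\tfrac{v_1-1}{v_1}\bigr)^{\,s}$ with $s=\sum_{j=1}^{k-1}\ChEps_j$, and separating the $k=1$ term gives exactly the formula of part~(1).

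For part~(2) I would first note that $h$ is increasing: $h$ carries fixed points of $f_{v_1}$ to fixed points of $f_{v_2}$, and since $1$ is not fixed by $f_{v_2}$ while the homeomorphism $h$ permutes $\{0,1\}$, we must have $h(0)=0$, so $h$ is an increasing homeomorphism. By the Lemma on the images of the points $\mu_{n,k}$ we have $h(\mu_{2,1}(f_{v_1}))=\mu_{2,1}(f_{v_2})$. The conjugacy $h\circ f_{v_1}=f_{v_2}\circ h$ gives $h(f_{v_1}^{\,n-1}(x))=f_{v_2}^{\,n-1}(h(x))$, so by monotonicity $f_{v_1}^{\,n-1}(x)\leq\mu_{2,1}(f_{v_1})$ holds if and only if $f_{v_2}^{\,n-1}(h(x))\leq\mu_{2,1}(f_{v_2})$. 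Hence $\ChEps_n(f_{v_2},h(x))=\ChEps_n$ for all $n$, which is part~(2). Part~(3) then follows at once by applying part~(1) to the map $f_{v_2}$ and the point $h(x)$, whose $f_{v_2}$-digit sequence is $(\ChEps_n)$.

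The branch-wise inversion and the factor count are routine; the single step deserving care is the limit $N\to\infty$, where I must confirm that the inverse-branch composition reconstructs the correct $x$ and not merely some point sharing the same digit tail. This is guaranteed by the uniqueness of the forward orbit $(x_n)$ together with the uniform contraction of $A_0$ and $A_1$. I therefore expect the main obstacle to be the careful bookkeeping of the exponent $\sum_{j=1}^{n-1}\ChEps_j$ rather than any genuine analytic subtlety.
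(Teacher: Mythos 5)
Your proof is correct, but it takes a genuinely different route from the paper's. You reconstruct $x$ directly by composing the affine inverse branches $A_0(y)=v_1y$ and $A_1(y)=1-(1-v_1)y$ selected by the digit sequence, and you obtain convergence from the uniform contraction $\max(v_1,1-v_1)<1$; this is the classical itinerary/iterated-function-system argument, self-contained and with the limit $N\to\infty$ handled explicitly (your exact identity $x=A_{\varrho_1}\circ\cdots\circ A_{\varrho_N}(x_N)$ with vanishing slope already rules out the ``wrong point with the same digit tail'' worry, so the closing caveat about uniqueness is not actually needed). The paper instead derives Theorem~\ref{th:2} as a corollary of Theorems~\ref{th:3} and~\ref{th:4}: Theorem~\ref{th:3} is an identity valid for an \emph{arbitrary} unimodal map $g$, expressing the partial sums $a_n=\widehat{x}_n+\beta_{n+1}\cdot L_{n+1,k_n}$, built from the preimage structure $\mu_{n,k}$ of $0$ and the $g$-decomposition $(\beta_i)$, as $\varrho_1+\sum_{i=2}^{n+1}(-1)^{\sum_{j=1}^{i-1}\varrho_j}\varrho_i\,L_{i,k_{i-1}}$, proved by a telescoping computation via Lemma~\ref{lema:2.9} and Remark~\ref{rem:2.8}; Theorem~\ref{th:4} then supplies the skew-tent-specific lengths $L_{n+1,k_n}=(1-v)^{\sum_{i=1}^{n}\varrho_i}v^{\,n-\sum_{i=1}^{n}\varrho_i}$. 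What each approach buys: yours is shorter, more elementary, and makes the convergence step explicit (the paper leaves $a_n\to x$ implicit), but it depends essentially on the branches being affine, so it cannot extend beyond skew tent maps; the paper's separates the combinatorics (valid for every unimodal map) from the metric data (the interval lengths), which is precisely what allows Theorem~\ref{th:2} to be generalized to Theorem~\ref{th:3}. Your treatment of parts (2) and (3) --- monotonicity of $h$ via $h(0)=0$, the identity $h(\mu_{2,1}(f_{v_1}))=\mu_{2,1}(f_{v_2})$, then applying part (1) to $h(x)$ --- fleshes out exactly what the paper asserts when it remarks that only part (1) is non-trivial.
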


Notice, that (2) of Theorem~\ref{th:2} follows from the basic
properties of the topological conjugacy, and (3) follows from (1)
and (2). In other words, the unique non-trivial part of
Theorem~\ref{th:2} is~(1). We will generalize Theorem~\ref{th:2}.
We will prove

\begin{theorem}\label{th:3}
Let $g$ be unimodal map and let $(\ChEps_i)_{i\geq 1}$ be the
$g$-digit sequence of a point $x\in [0, 1]$. For any $n\geq 1$
denote
$$a_n = \iks{n} + \dGt_{n+1}\cdot \lI_{n+1,k_n}.$$ Then $$ a_n =
\ChEps_1 +\sum\limits_{i=2}^{n+1}
(-1)^{\sum\limits_{j=1}^{i-1}\ChEps_j}\cdot \ChEps_i \cdot
\lI_{i,k_{i-1}}.
$$
\end{theorem}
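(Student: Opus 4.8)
The plan is to prove the identity by induction on $n$, after reducing it to a single difference identity. Writing $S_n=\sum_{j=1}^{n}\ChEps_j$ and calling the claimed right-hand side $R_n$, one checks immediately that $R_0=\ChEps_1$ (empty sum) and that $R_n-R_{n-1}=(-1)^{S_n}\ChEps_{n+1}\lI_{n+1,k_n}$, since passing from $R_{n-1}$ to $R_n$ only adds the term $i=n+1$. Hence it suffices to show that the numbers $a_n$ obey the same anchor and the same recursion, namely $a_0=\ChEps_1$ and
$$a_n-a_{n-1}=(-1)^{S_n}\cdot\ChEps_{n+1}\cdot\lI_{n+1,k_n},\qquad n\ge 1.$$
The anchor is routine: $\iks{0}=\mu_{1,0}=0$, $k_0=0$, $\lI_{1,0}=1$, and $\dGt_1=\ChEps_1$, so $a_0=\dGt_1=\ChEps_1$.

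First I would rewrite $a_n$ and $a_{n-1}$ as single nodes. Since $\iks{n}=\mu_{n+1,k_n}$ and $\lI_{n+1,k_n}=\mu_{n+1,k_n+1}-\mu_{n+1,k_n}$, and $\dGt_{n+1}\in\{0,1\}$, we get $a_n=\mu_{n+1,\,k_n+\dGt_{n+1}}$; the same computation at one level lower gives $a_{n-1}=\mu_{n,\,k_{n-1}+\dGt_n}$. Two structural facts then bring $a_{n-1}$ to level $n+1$: the inclusion $g^{-n}(0)\subseteq g^{-(n+1)}(0)$ (valid because $g(0)=0$) places the level-$n$ nodes at the even indices of level $n+1$, so $\mu_{n,j}=\mu_{n+1,2j}$; and $k_n=2k_{n-1}+\dGt_n$, which is read off from $k_m=\sum_{i=1}^{m}\dGt_i2^{m-i}$. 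Combining them, $a_{n-1}=\mu_{n+1,\,2(k_{n-1}+\dGt_n)}=\mu_{n+1,\,k_n+\dGt_n}$. Therefore
$$a_n-a_{n-1}=\mu_{n+1,\,k_n+\dGt_{n+1}}-\mu_{n+1,\,k_n+\dGt_n}=(\dGt_{n+1}-\dGt_n)\,\lI_{n+1,k_n},$$
because the two indices differ by $\dGt_{n+1}-\dGt_n\in\{-1,0,1\}$ and consecutive nodes at level $n+1$ are $\lI_{n+1,k_n}$ apart.

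In view of the last display, the entire theorem collapses to the single identity
$$\dGt_{n+1}-\dGt_n=(-1)^{S_n}\,\ChEps_{n+1},$$
which is the core of the argument and the step I expect to be the main obstacle. To establish it I would analyse the interval $I=[\mu_{n+1,k_n},\mu_{n+1,k_n+1}]$ containing $x$. On $I$ the map $g^n$ is a monotone bijection onto $[0,1]$, and its unique new interior node at level $n+2$, the splitting point $\mu_{n+2,2k_n+1}$, is exactly the $g^n$-preimage of $v$ in $I$; moreover $\dGt_{n+1}$ is $0$ or $1$ according to whether $x$ lies to the left or to the right of this point. Tracking orientations, $g^n|_I$ preserves or reverses order according to the parity of the number of iterates $g^i(x)$, $0\le i\le n-1$, landing in the right branch $\{t>v\}$, and that count is precisely $S_n$; thus the orientation of $g^n|_I$ is $(-1)^{S_n}$. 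Reading off the side of $x$ relative to the split point yields the Gray-code relation $\dGt_{n+1}=\ChEps_{n+1}$ when $S_n$ is even and $\dGt_{n+1}=1-\ChEps_{n+1}$ when $S_n$ is odd (and analogously $\dGt_n$ is governed by the parity of $S_{n-1}$). A short case analysis on the parity of $S_{n-1}$ and the value of $\ChEps_n$, using $S_n=S_{n-1}+\ChEps_n$, then produces $\dGt_{n+1}-\dGt_n=(-1)^{S_n}\ChEps_{n+1}$ in each of the four cases.

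With this identity in hand the induction closes: the base value $a_0=\ChEps_1=R_0$ holds, and the difference computation shows that $a_n$ and $R_n$ satisfy the same recursion, whence $a_n=R_n$ for every $n\ge 1$. The only delicate points are the degenerate configurations in which some iterate $g^i(x)$ equals $v$, or $x$ coincides with a node $\mu_{n+1,k}$; these are absorbed by the ``maximal $k$'' convention in the definition of $k_n$, which fixes the one-sided choice consistently and keeps both the orientation count and the Gray-code reading well defined.
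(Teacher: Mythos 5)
Your proof is correct, and it takes a genuinely different route to the same telescoping skeleton as the paper. Both arguments reduce the theorem to the base value $a_0=\dGt_1=\ChEps_1$ plus the increment identity $a_n-a_{n-1}=(-1)^{S_n}\ChEps_{n+1}\cdot\lI_{n+1,k_n}$, where $S_n=\sum_{j=1}^n\ChEps_j$. The paper gets the increment analytically: it expands $a_{n+1}$ via Lemma~\ref{lema:2.9}(i), absorbs the factor $\delta_{n+1,k_n}$ into $\lI_{n+2,k_{n+1}}$ by Remark~\ref{rem:2.8}(ii) after a case split on $\dGt_{n+1}\in\{0,1\}$, and obtains $a_{n+1}-a_n=(-1)^{\dGt_{n+1}}\Rot^{\dGt_{n+1}}(\dGt_{n+2})\cdot\lI_{n+2,k_{n+1}}$, converting to digits by citing Lemma~\ref{lema:3.1}. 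You instead identify $a_n=\mu_{n+1,\,k_n+\dGt_{n+1}}$ and, via $\mu_{n,j}=\mu_{n+1,2j}$ and $k_n=2k_{n-1}+\dGt_n$, also $a_{n-1}=\mu_{n+1,\,k_n+\dGt_n}$, so the increment $(\dGt_{n+1}-\dGt_n)\lI_{n+1,k_n}$ falls out with no $\delta$-bookkeeping and no case analysis; this is slicker, and it makes visible \emph{why} the sum telescopes: the $a_n$ are nested grid nodes approximating $x$. Two observations. First, what you flag as the main obstacle, $\dGt_{n+1}-\dGt_n=(-1)^{S_n}\ChEps_{n+1}$, is exactly Lemma~\ref{lema:3.1} in disguise: parts (i) and (ii) give $\ChEps_{n+1}=\Rot^{\dGt_n}(\dGt_{n+1})$ and $\dGt_n\equiv S_n\pmod 2$, and $(-1)^{\dGt_n}\Rot^{\dGt_n}(\dGt_{n+1})=\dGt_{n+1}-\dGt_n$ is a two-line check, so you could simply cite that lemma; your orientation argument (the split point $\mu_{n+2,2k_n+1}$ is the $g^n$-preimage of $v=\mu_{2,1}$ in $I_{n+1,k_n}$, and $g^n$ there has orientation $(-1)^{S_n}$) is a sound independent proof of the same fact, essentially Remark~\ref{rem:2.11} made geometric. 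Second, a small point of rigor: the even-index placement $\mu_{n,j}=\mu_{n+1,2j}$ does not follow from the inclusion $g^{-n}(0)\subseteq g^{-(n+1)}(0)$ alone; you also need that each level-$n$ gap contains exactly one new level-$(n+1)$ node, which your later appeal to monotone surjectivity of $g^n$ on each gap supplies (and which the paper's Notation~\ref{not:2.4} presupposes), so this is a presentational gap, not a mathematical one. The trade-off overall: your route is more elementary and self-contained for Theorem~\ref{th:3} itself, while the paper's $\delta$-recursions (Lemmas~\ref{lema:2.9} and~\ref{lema:2.10}) are reused unchanged for the parallel formulas for $\iksp{n}$ and $\ikspm{n}$ in Section~\ref{explicit}, which your node-identification trick does not cover.
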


\begin{theorem}\label{th:4}
For any $v\in (0, 1)$ and a point $x\in [0, 1]$ with $f_v$-digit
sequence $(\ChEps_i)_{i\geq 1}$ we have that $$\lI_{n+1,k_n}
=(1-v)^{\sum\limits_{i=1}^{n} \varrho_i}\cdot
v^{n-\sum\limits_{i=1}^{n} \varrho_i}.$$
\end{theorem}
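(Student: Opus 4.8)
\emph{Proof proposal.} The plan is to exploit the self-similar (Markov) structure of the partitions $f_v^{-n}(0)$ under the two affine branches of $f_v$, and then to run a downward induction along the orbit of $x$. The length $L_{n+1,k_n}$ is the length of the level-$(n+1)$ interval $[\mu_{n+1,k_n},\mu_{n+1,k_n+1}]$ containing $x$, and the digit $\varrho_i$ records which branch the $(i-1)$-st point of the orbit visits; so the statement is really a statement about how iterating $f_v$ scales this interval.

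First I would record the elementary facts about the partition points. Since $f_v(0)=0$, $f_v(1)=0$ and $f_v(v)=1$, one has $f_v^{-1}(0)=\{0,1\}$ and $f_v^{-2}(0)=\{0,v,1\}$, so $\mu_{2,1}(f_v)=v$; and because $f_v^{-2}(0)\subseteq f_v^{-m}(0)$ for every $m\geq 2$, the point $v$ is a partition point of every level $m\geq 2$. Consequently each interval $[\mu_{n+1,k},\mu_{n+1,k+1}]$ with $n\geq 1$ lies entirely in $[0,v]$ or entirely in $[v,1]$, where $f_v$ is affine (slope $1/v$ on the left, $-1/(1-v)$ on the right).

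Second, the key geometric step is the Markov property: $f_v$ carries each level-$(n+1)$ interval homeomorphically onto a level-$n$ interval. Indeed, every $a\in[0,1]$ has exactly the two preimages $va\in[0,v]$ and $1-(1-v)a\in[v,1]$, so $f_v^{-1}\big(f_v^{-n}(0)\big)=f_v^{-(n+1)}(0)$; the left branch sends the $2^{n-1}+1$ level-$n$ points order-preservingly onto the level-$(n+1)$ points of $[0,v]$ and the right branch sends them order-reversingly onto those of $[v,1]$, with $v$ the common image of $1$ (the count $2(2^{n-1}+1)-1=2^n+1$ confirms that nothing is missed). Hence $f_v$ maps each level-$(n+1)$ interval onto a level-$n$ interval, multiplying its length by $1/v$ if it lies in $[0,v]$ and by $1/(1-v)$ if it lies in $[v,1]$. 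Applying this to $J=[\mu_{n+1,k_n},\mu_{n+1,k_n+1}]$: since $f_v$ is a homeomorphism on $J$, the image $f_v(J)$ is the level-$n$ interval containing $f_v(x)$, i.e. the one indexed by $k_{n-1}$ for the point $f_v(x)$; and $J\subseteq[0,v]$ precisely when $x\leq v$, that is when $\varrho_1=0$. The length rule thus gives $L_{n+1,k_n}=v^{1-\varrho_1}(1-v)^{\varrho_1}\cdot L_{n,k_{n-1}}$, the second factor being computed for $f_v(x)$.

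Finally, since the $g$-digit sequence of $f_v(x)$ is the left shift of that of $x$, namely $\varrho_i(f_v,f_v(x))=\varrho_{i+1}(f_v,x)$, I would iterate this identity $n$ times, descending to level $1$, where the unique interval is $[0,1]$ with $L_{1,0}=1$. Multiplying the collected factors yields $L_{n+1,k_n}=\prod_{i=1}^n v^{1-\varrho_i}(1-v)^{\varrho_i}=(1-v)^{\sum_{i=1}^n\varrho_i}\,v^{n-\sum_{i=1}^n\varrho_i}$, as claimed; the base case $n=1$ ($L_{2,0}=v$, $L_{2,1}=1-v$) is checked directly. I expect the main obstacle to be the second step: proving the Markov property cleanly, including the order reversal on $[v,1]$ and the accompanying index bookkeeping, together with the treatment of the countable set $x\in f_v^{-\infty}(0)$, where the convention "$x\leq v$" defining $\varrho_1$ must be reconciled with the maximal-$k$ convention defining $k_n$; those boundary points I would dispose of by separate direct verification.
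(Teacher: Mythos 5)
Your proposal is correct away from the countable set $f_v^{-\infty}(0)$ and takes a genuinely different route from the paper's. You induct along the orbit: the Markov/self-similarity property lets you push the level-$(n+1)$ interval containing $x$ forward by $f_v$ onto the level-$n$ interval containing $f_v(x)$, picking up the factor $v^{1-\ChEps_1}(1-v)^{\ChEps_1}$, and the shift property $\ChEps_i(f_v,f_v(x))=\ChEps_{i+1}(f_v,x)$ telescopes the product down to the single level-$1$ interval of length $1$. The paper instead keeps $x$ fixed and refines the partition: (ii) of Remark~\ref{rem:2.8} combined with Lemma~\ref{lema:4.1} (that $\delta_{k_n}=\Rot^{\dGt_n}(v)$, itself extracted from the dynamical Remark~\ref{rem:2.12}) yields the recursion $\lI_{n+1,k_n}=\lI_{n,k_{n-1}}\cdot\Rot^{\dGt_{n-1}+\dGt_n}(v)$ of Lemma~\ref{rem:4.3}, hence the product formula of Lemma~\ref{rem:4.5}, all phrased in the $g$-decomposition $(\dGt_i)$; only at the last step is this converted to digits via the dictionary $\ChEps_i=\Rot^{\dGt_{i-1}}(\dGt_i)=|\dGt_i-\dGt_{i-1}|$ of Lemma~\ref{lema:3.1}. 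So where the paper's factor at step $n$ compares two consecutive bits $\dGt_{n-1},\dGt_n$ of the address of $x$, your factor reads off a single digit as a branch indicator, bypassing $(\dGt_i)$ entirely. Your approach buys a shorter, self-contained and more transparent proof, available because the branches of $f_v$ are affine; the paper's route buys reusability, since Lemmas~\ref{lema:4.1}, \ref{rem:4.3}, \ref{rem:4.5} and the dictionary of Lemma~\ref{lema:3.1} are exactly the ingredients recycled in Section~\ref{explicit} and in the proof of Theorem~\ref{th:3}, where the general unimodal (non-affine) case must be handled anyway.

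One caution about the boundary set you flag at the end: the ``separate direct verification'' cannot fully succeed under either fixed tie-breaking rule, so your instinct to worry was right. For $x=1-v(1-v)=\mu_{3,3}$ one has $\ChEps_1=1$, $\ChEps_2=0$ (since $f_v(x)=v\leq v$), so the claimed length at $n=2$ is $v(1-v)$; but the minimal-$k$ convention of Notation~\ref{not:2.1} selects the interval $[v,\,1-v(1-v)]$ of length $(1-v)^2$, while the maximal-$k$ convention of the Introduction fails instead at $x=v$, $n=1$ (formula $v$, selected interval $[v,1]$ of length $1-v$). On $f_v^{-\infty}(0)$ the statement is true only if each tie is resolved consistently with the digit sequence — equivalently, so that the dictionary of Lemma~\ref{lema:3.1} holds at every level. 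This is precisely the convention your orbit-pushforward recursion produces automatically (the chosen interval at level $n+1$ is the preimage, along the branch recorded by $\ChEps_1$, of the interval chosen for $f_v(x)$ at level $n$), and it is the convention the paper implicitly assumes; note the paper itself wavers between ``maximal'' and ``minimal'' in its two definitions of $k_n$.
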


Now Theorem~\ref{th:2} follows from Theorems~\ref{th:3}
and~\ref{th:4}.

We prove Theorem~\ref{th:4} at the end of Section~\ref{skew}, and
we prove Theorem~\ref{th:3} at the end of Section~\ref{explicit}.
We state the relation between invariant coordinates,
$g$-decomposition and $g$-digit sequences in Section~\ref{relat}.

\section{Preliminaries}

We have introduced the notations from this section in
our~\cite{Chaos} in generalization of the original proof of
Theorem~\ref{th:1}.

\begin{notation}\cite{Chaos}\label{not:2.1}
1. For every $n\geq 0$ let $k_n,\, 0\leq k_n< 2^{n-1}$ be minimal
such that $x\in [\mu_{n+1,k_n}, \mu_{n+1,k_n+1}]$. Denote $\iks{n}
=\mu_{n+1,k_n}, $ $\iksp{n} = \mu_{n+1,k_n+1},$ $\ikspm{n} =
\mu_{n+2,2k_n+1}, $ $ \iksm{n} =\mu_{n+1,k_n-1}$ and $\iksmp{n}
=\mu_{n+2,2k_n-1}$.

2. For any $n\geq 1$ and $k,\, 0\leq k< 2^{n-1}$ denote $I_{n,k} =
(\mu_{n,k},\, \mu_{n,k+1})$ and $\lI_{n,k} =
\mu_{n,k+1}-\mu_{n,k}$.
\end{notation}

\begin{notation}
For every $i,j:\, 0\leq i<j$ denote $k_{i,j} =
\sum\limits_{t=i}^{j}\dGt_t2^{j-t}$. Remark that $k_n = k_{0,n} =
k_{1,n}$.
\end{notation}

\begin{remark}
Notice, that, by Notation~\ref{not:2.1},\\ $I_{n+1,k_n} =
(\iks{n}, \iksp{n})$ and\\
$I_{n+2,2k_n} = (\iks{n}, \ikspm{n})$.
\end{remark}

\begin{notation}\label{not:2.4}
For any $n\geq 1$ and $k,\, 0\leq k< 2^n$ denote
$$\delta_{n,k} =
\frac{\mu_{n+1,2k+1}-\mu_{n,k}}{\mu_{n,k+1}-\mu_{n,k}}.$$ If $I$
is an interval of the form $I = I_{n,k}$ then write $\delta(I)$
for $\delta_{n,k}$. This will be convenient, for example, for the
expression $\delta(g(I_{n,k}))$.
\end{notation}

\begin{notation}
1. For every $t\in [0, 1]$ denote $\Rot(t) = 1-t$.

2. For every $n\geq 1$ and $t\in \{0,\ldots, 2^{n}-1\}$ denote
$\Rot_n(t) = 2^n-t-1$.
\end{notation}

\begin{remark}\cite[Rem.~2.14]{Plakh-Arx-Sao}\label{rem:2.6}
For any $x_1, x_2\in \{0; 1\}$ and $t\in [0, 1]$ we have that
$$
\Rot^{\Rot^{x_1}(x_2)}(t) = \Rot^{x_1+x_2}(t).
$$
\end{remark}

The next follows directly from Remark~\ref{rem:2.6}.

\begin{remark}\label{rem:2.7}
For every $x\in \{0; 1\}$ and $t\in [0, 1]$ we have
$$\Rot^{\Rot^x(0)}(t) = \Rot^x(t).$$
\end{remark}

\begin{remark}\label{rem:2.8}
It follows from Notations~\ref{not:2.1} and~\ref{not:2.4} that

(i) $ \ikspm{n} = \iks{n} +\delta_{n+1,k_n}\cdot \lI_{n+1,k_n}$,

(ii) $\lI_{n+2,k_{n+1}} = \Rot^{x_{n+1}}(\delta_{n+1,k_n})\cdot
\lI_{n+1,k_n}$,

(iii) $\delta_{n+1,k_n} \cdot \lI_{n+1,k_n} = \lI_{n+2,k_{n+1}}
+\dGt_{n+1}\cdot (1-2\cdot \lI_{n+2,k_{n+1}})$.
\end{remark}

\begin{lemma}\cite[Lema~2.11]{Plakh-Arx-Sao}\label{lema:2.9}
For any $n\geq 1$ we have that:

(i) $\iks{n+1} =\iks{n} +\dGt_{n+1}\cdot \delta_{n+1,k_n}\cdot
\lI_{n+1,k_n}$,

(ii) $\iksp{n+1} =\iksp{n} - (1-\dGt_{n+1})\cdot
(1-\delta_{n+1,k_n})\cdot \lI_{n+1,k_n}$.
\end{lemma}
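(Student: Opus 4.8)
The plan is to prove (i) and (ii) simultaneously by a case distinction on the single extra digit $\dGt_{n+1}\in\{0,1\}$, feeding two structural inputs into Remark~\ref{rem:2.8}(i). The first input is the index recurrence: from the $g$-decomposition formula $k_n=\sum_{i=1}^{n}\dGt_i2^{n-i}$ one reads off $k_{n+1}=2k_n+\dGt_{n+1}$. The second is the nesting of the level sets: since $g(0)=0$ we have $g^{-m}(0)\subseteq g^{-(m+1)}(0)$ for every $m$, and in the induced ordering the old points occupy the even indices while each interval $I_{m,k}$ receives a single new point at the odd index $2k+1$; that is, $\mu_{m,k}=\mu_{m+1,2k}$. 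This even/odd convention is exactly what makes Notation~\ref{not:2.4} and Remark~\ref{rem:2.8} well-posed, and I would use it at the indices $k_n$ and $k_n+1$ of level $m=n+1$.

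Applying these, write $\iks{n+1}=\mu_{n+2,k_{n+1}}=\mu_{n+2,\,2k_n+\dGt_{n+1}}$ and $\iksp{n+1}=\mu_{n+2,\,2k_n+\dGt_{n+1}+1}$, while the nesting identity gives $\iks{n}=\mu_{n+1,k_n}=\mu_{n+2,2k_n}$ and $\iksp{n}=\mu_{n+1,k_n+1}=\mu_{n+2,2k_n+2}$; recall also $\ikspm{n}=\mu_{n+2,2k_n+1}$ by Notation~\ref{not:2.1} and $\lI_{n+1,k_n}=\iksp{n}-\iks{n}$. If $\dGt_{n+1}=0$, then $\iks{n+1}=\mu_{n+2,2k_n}=\iks{n}$, which is (i) because the right-hand summand there carries the factor $\dGt_{n+1}=0$; and $\iksp{n+1}=\mu_{n+2,2k_n+1}=\ikspm{n}$, so Remark~\ref{rem:2.8}(i) expands this to $\iks{n}+\delta_{n+1,k_n}\lI_{n+1,k_n}=\iksp{n}-(1-\delta_{n+1,k_n})\lI_{n+1,k_n}$, which is (ii) with $1-\dGt_{n+1}=1$. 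If $\dGt_{n+1}=1$ the two roles swap: $\iksp{n+1}=\mu_{n+2,2k_n+2}=\iksp{n}$ gives (ii) because the factor $1-\dGt_{n+1}$ vanishes, while $\iks{n+1}=\mu_{n+2,2k_n+1}=\ikspm{n}=\iks{n}+\delta_{n+1,k_n}\lI_{n+1,k_n}$ by Remark~\ref{rem:2.8}(i) gives (i). In both cases the factors $\dGt_{n+1}$ and $1-\dGt_{n+1}$ select precisely the nontrivial branch, so both formulas hold for all $n\geq 1$.

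I expect the only genuinely delicate point to be the nesting identity $\mu_{m,k}=\mu_{m+1,2k}$ together with the claim that each $I_{m,k}$ contains exactly one new point: the inclusion $g^{-m}(0)\subseteq g^{-(m+1)}(0)$ and the count $|g^{-(m+1)}(0)|-|g^{-m}(0)|=2^{m-1}$, equal to the number of intervals $I_{m,k}$, give one new point per interval on average, and one still has to rule out an interval absorbing two of them. This is where unimodality enters, since each restriction $g^{m}|_{I_{m,k}}$ rises from $0$ to the maximal value $1$ and falls back to $0$, attaining $1$ exactly once, which is the unique new zero of $g^{m+1}$ inside $I_{m,k}$. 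Once this bookkeeping is fixed, the rest is the direct substitution of Remark~\ref{rem:2.8}(i) and the identity $\lI_{n+1,k_n}=\iksp{n}-\iks{n}$ carried out above.
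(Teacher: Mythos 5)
Your proof is correct, but note the point of comparison: the paper contains no proof of this lemma at all --- it is imported by citation from \cite[Lemma~2.11]{Plakh-Arx-Sao} --- so what you have produced is a self-contained reconstruction rather than a parallel to an in-paper argument. Your reconstruction stays entirely within the paper's framework and is sound: the recurrence $k_{n+1}=2k_n+\dGt_{n+1}$ is read off directly from $k_n=\sum_{i=1}^{n}\dGt_i2^{n-i}$, and the interleaving identity $\mu_{m,k}=\mu_{m+1,2k}$ with one new point $\mu_{m+1,2k+1}$ per interval is exactly what makes Notation~\ref{not:2.4} (the definition of $\delta_{n,k}$) and Notation~\ref{not:2.1} (the assertions $\ikspm{n}=\mu_{n+2,2k_n+1}$ and $I_{n+2,2k_n}=(\iks{n},\ikspm{n})$) well-posed, so the paper tacitly presupposes it; you correctly flag this as the only delicate step and give the right justification, namely that $g^{m+1}(x)=0$ iff $g^m(x)\in\{0,1\}$, that the points with $g^m(x)=1$ are precisely the new zeros, that $g^m$ attains $1$ exactly once on each $\overline{I_{m,k}}$, and that the count $2^n+1-(2^{n-1}+1)=2^{n-1}$ matches the number of intervals. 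Your two-case check then verifies both halves correctly: for $\dGt_{n+1}=0$, (i) is trivial and (ii) follows from Remark~\ref{rem:2.8}(i) together with $\iksp{n}-\lI_{n+1,k_n}=\iks{n}$; for $\dGt_{n+1}=1$, (ii) is trivial and (i) is Remark~\ref{rem:2.8}(i) verbatim. The only thing a fully formal writeup would add is the short induction establishing the lap structure of $g^m$ on each $I_{m,k}$ (which also needs $g_l$, $g_r$ strictly monotone, as the paper's count $|g^{-n}(0)|=2^{n-1}+1$ already implicitly requires); at the paper's level of rigor your argument is complete.
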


\begin{lemma}\label{lema:2.10}
For any $n\geq 1$ we have that:

$$\ikspm{n+1} = \ikspm{n} +(-1)^{1+\dGt_{n+1}}\cdot
\Rot^{1+\dGt_{n+1}}(\delta_{n+2,k_{n+1}})\cdot
\lI_{n+2,k_{n+1}}.$$
\end{lemma}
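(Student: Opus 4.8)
The plan is to compute the difference $\ikspm{n+1}-\ikspm{n}$ directly, pushing everything down to level $n+2$ so that the common factor $\lI_{n+2,k_{n+1}}$ can be pulled out. First I would apply Remark~\ref{rem:2.8}(i) at two consecutive levels: at level $n+1$ it reads $\ikspm{n+1} = \iks{n+1} + \delta_{n+2,k_{n+1}}\cdot \lI_{n+2,k_{n+1}}$, and at level $n$ it reads $\ikspm{n} = \iks{n} + \delta_{n+1,k_n}\cdot \lI_{n+1,k_n}$. Subtracting these splits the difference into the increment $\iks{n+1}-\iks{n}$ plus the two $\delta\cdot \lI$ terms.

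Next I would eliminate $\iks{n+1}-\iks{n}$ using Lemma~\ref{lema:2.9}(i), which gives $\iks{n+1}-\iks{n} = \dGt_{n+1}\cdot \delta_{n+1,k_n}\cdot \lI_{n+1,k_n}$. Combining this with the $-\delta_{n+1,k_n}\cdot \lI_{n+1,k_n}$ term already present collapses the entire level-$(n+1)$ contribution into a single term, so that
$$\ikspm{n+1}-\ikspm{n} = (\dGt_{n+1}-1)\cdot \delta_{n+1,k_n}\cdot \lI_{n+1,k_n} + \delta_{n+2,k_{n+1}}\cdot \lI_{n+2,k_{n+1}}.$$

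The key move is then to remove the remaining level-$(n+1)$ factor via Remark~\ref{rem:2.8}(iii), which rewrites $\delta_{n+1,k_n}\cdot \lI_{n+1,k_n}$ as $\lI_{n+2,k_{n+1}} + \dGt_{n+1}\cdot(1-2\lI_{n+2,k_{n+1}})$. Since $\dGt_{n+1}\in\{0,1\}$ we have $(\dGt_{n+1}-1)\dGt_{n+1}=0$, so multiplying by $(\dGt_{n+1}-1)$ annihilates the bracketed $\dGt_{n+1}$-term and leaves only $(\dGt_{n+1}-1)\cdot \lI_{n+2,k_{n+1}}$. After this every summand carries the factor $\lI_{n+2,k_{n+1}}$, and the difference reduces to $\bigl(\dGt_{n+1}-1+\delta_{n+2,k_{n+1}}\bigr)\cdot \lI_{n+2,k_{n+1}}$.

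It remains to recognize the scalar coefficient as $(-1)^{1+\dGt_{n+1}}\cdot \Rot^{1+\dGt_{n+1}}(\delta_{n+2,k_{n+1}})$. This is the only genuinely delicate bookkeeping step, and it follows at once by inspecting the two admissible values of $\dGt_{n+1}$: when $\dGt_{n+1}=0$ both expressions equal $\delta_{n+2,k_{n+1}}-1 = -\Rot(\delta_{n+2,k_{n+1}})$, and when $\dGt_{n+1}=1$ both equal $\delta_{n+2,k_{n+1}}$, using $\Rot^{2}=\mathrm{id}$. I expect no serious obstacle; the main points to watch are the correct index shift in Remark~\ref{rem:2.8}(i) (so that level $n+1$ yields $\delta_{n+2,k_{n+1}}$ and $\lI_{n+2,k_{n+1}}$) and the sign bookkeeping that lets the single $\Rot$-formula cover both parities of $\dGt_{n+1}$.
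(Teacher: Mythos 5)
Your proof is correct and takes essentially the same route as the paper's: both expand $\ikspm{n+1}$ and $\ikspm{n}$ via Remark~\ref{rem:2.8}(i) at consecutive levels, substitute Lemma~\ref{lema:2.9}(i) for $\iks{n+1}-\iks{n}$, and eliminate the remaining level-$(n+1)$ term with Remark~\ref{rem:2.8}(iii), finishing with the two-case check on $\dGt_{n+1}$. The only (cosmetic) difference is that you exploit $(\dGt_{n+1}-1)\cdot\dGt_{n+1}=0$ to collapse everything into the single coefficient $\dGt_{n+1}-1+\delta_{n+2,k_{n+1}}$ before the case inspection, whereas the paper performs the case split on $\dGt_{n+1}$ slightly earlier, on the unexpanded product.
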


\begin{proof}
By (i) of Remark~\ref{rem:2.8},
\begin{equation}\label{eq:2.1}
\iks{n} = \ikspm{n} -\delta_{n+1,k_n}\cdot \lI_{n+1,k_n}.
\end{equation}
Also, plug $n+1$ into~(i) of ~\ref{rem:2.8}, whence
$$
\ikspm{n+1} = \iks{n+1} +\delta_{n+2,k_{n+1}}\cdot
\lI_{n+2,k_{n+1}} \stackrel{\text{(i) of Lem.~\ref{lema:2.9}}}{=}
$$$$
=\iks{n} + \dGt_{n+1}\cdot \delta_{n+1,k_n}\cdot \lI_{n+1,k_n}
+\delta_{n+2,k_{n+1}}\cdot
\lI_{n+2,k_{n+1}}\stackrel{\text{by~\eqref{eq:2.1}}}{=}
$$$$
=\ikspm{n} -\delta_{n+1,k_n}\cdot \lI_{n+1,k_n} +\dGt_{n+1}\cdot
\delta_{n+1,k_n}\cdot \lI_{n+1,k_n} +\delta_{n+2,k_{n+1}}\cdot
\lI_{n+2,k_{n+1}}=
$$$$
=\ikspm{n} +(\dGt_{n+1}-1)\cdot \delta_{n+1,k_n} \cdot
\lI_{n+1,k_n} +\delta_{n+2,k_{n+1}}\cdot
\lI_{n+2,k_{n+1}}\stackrel{\text{by~(iii) of
Rem.~\ref{rem:2.8}}}{=}
$$$$
=\ikspm{n} +(\dGt_{n+1}-1)\cdot (\lI_{n+2,k_{n+1}} +
\dGt_{n+1}\cdot (1-2\cdot \lI_{n+2,k_{n+1}}))
+\delta_{n+2,k_{n+1}}\cdot \lI_{n+2,k_{n+1}}.
$$
If $\dGt_{n+1}=0$, then $$ \ikspm{n+1} = \ikspm{n} -
\lI_{n+2,k_{n+1}} +\delta_{n+2,k_{n+1}}\cdot \lI_{n+2,k_{n+1}}
=\ikspm{n} -\Rot(\delta_{n+2,k_{n+1}})\cdot \lI_{n+2,k_{n+1}}.
$$ If $\dGt_{n+1}=1$, then $$ \ikspm{n+1} = \ikspm{n}
+\delta_{n+2,k_{n+1}}\cdot \lI_{n+2,k_{n+1}}.
$$
In each of these the lemma follows.
\end{proof}

\begin{remark}\cite[Rem.~2.17]{Plakh-Arx-Sao}\label{rem:2.11}
For any $n\geq 1$ and all $i, 1\leq i\leq n$, one have $
g^i(I_{n+1,k_n}) = I_{n+1-i,\Rot_{n-i}^{\dGt_i} (k_{i+1, n})}. $
\end{remark}

We have obtained the next fact duffing the proof
of~\cite[Remark~2.19]{Plakh-Arx-Sao}.

\begin{remark}\label{rem:2.12}
For every $n\geq 1$ and for all $i\leq n$ we have
$$ \delta_{n+1,k} =
\Rot^{\dGt_{i}}( \delta_{n+1-i,\Rot_{n-i+1}^{\dGt_{i}}(k_{i,n})}).
$$
\end{remark}

\section{Invariant coordinates, $g$-sequences and
$g$-decompositions}\label{relat}

Notice, that there is strong relation between the $g$-expansion
$(\dGt_i)_{i\geq 0}$ and digit sequence.

\begin{lemma}\label{lema:3.1}
Let $\varrho_i$ be $g$-digit sequence of $x$. Then

(i) $\varrho_i =\Rot^{\dGt_{i-1}}(\dGt_i)$, and

(ii) $\dGt_i = \Rot^{\sum\limits_{j=1}^i\ChEps_j}(0)$ for $i\geq
1$.
\end{lemma}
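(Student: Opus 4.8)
The plan is to prove part~(i) first, since it carries the genuine dynamical content, and then to deduce part~(ii) from it by a short induction on~$i$ that is pure manipulation of the reflection operator~$\Rot$. The heart of~(i) is to decide, for each~$n$, on which side of the turning value $v=\mu_{2,1}$ the iterate $g^{n-1}(x)$ lies, and to read this off from the combinatorics of the interval $I_{n+1,k_n}$ that contains $x$. The key tool is Remark~\ref{rem:2.11}, which already tells us precisely which interval $I_{n+1,k_n}$ is carried to under iteration of $g$.

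For $n\geq 2$ I would apply Remark~\ref{rem:2.11} with $i=n-1$. Since $k_{n,n}=\dGt_n$ and $\Rot_1(t)=2-t-1=1-t=\Rot(t)$, this gives
$$g^{n-1}(I_{n+1,k_n}) = I_{2,\,\Rot^{\dGt_{n-1}}(\dGt_n)}.$$
Now $I_{2,0}=(\mu_{2,0},\mu_{2,1})=(0,v)$ and $I_{2,1}=(\mu_{2,1},\mu_{2,2})=(v,1)$, so the index of the level-$2$ interval containing a point is $0$ exactly when that point is $\leq v$ and $1$ otherwise; by the very definition of the $g$-digit sequence, the index of the level-$2$ interval containing $g^{n-1}(x)$ therefore equals $\ChEps_n$. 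Since $g^{n-1}(x)\in g^{n-1}(I_{n+1,k_n})$, comparing indices yields $\ChEps_n=\Rot^{\dGt_{n-1}}(\dGt_n)$, which is~(i). The case $n=1$ I would check by hand: $x\in I_{2,k_1}=I_{2,\dGt_1}$, and $\ChEps_1=0$ iff $x\leq v$ iff $\dGt_1=0$, so $\ChEps_1=\dGt_1=\Rot^{\dGt_0}(\dGt_1)$ because $\dGt_0=0$.

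For part~(ii), I would rewrite~(i) as $\dGt_i=\Rot^{\dGt_{i-1}}(\ChEps_i)$, using that $\Rot$ is an involution on $\{0;1\}$, and argue by induction. The base case $i=1$ reads $\dGt_1=\ChEps_1=\Rot^{\ChEps_1}(0)$, valid since $\Rot^{c}(0)=c$ for $c\in\{0;1\}$. For the step, assume $\dGt_{i-1}=\Rot^{\sum_{j=1}^{i-1}\ChEps_j}(0)$ and set $s=\big(\sum_{j=1}^{i-1}\ChEps_j\big)\bmod 2\in\{0;1\}$; then
$$\dGt_i=\Rot^{\dGt_{i-1}}(\ChEps_i)=\Rot^{\Rot^{s}(0)}(\ChEps_i)=\Rot^{s}(\ChEps_i),$$
where the last equality is Remark~\ref{rem:2.7}. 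Writing $\ChEps_i=\Rot^{\ChEps_i}(0)$ and composing reflections finally gives $\Rot^{s}(\ChEps_i)=\Rot^{\,s+\ChEps_i}(0)=\Rot^{\sum_{j=1}^{i}\ChEps_j}(0)$, which closes the induction.

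The main obstacle is not any single computation but the careful bookkeeping of the reflection exponents: one must use throughout that $\Rot^{m}$ depends only on the parity of~$m$, reduce the outer exponents into $\{0;1\}$ before invoking Remarks~\ref{rem:2.6}--\ref{rem:2.7}, and confirm that $\Rot_1$ really is $\Rot$ so that the output of Remark~\ref{rem:2.11} feeds cleanly into the digit comparison. A secondary point deserving care is the boundary behaviour: when $g^{n-1}(x)=v$, or equivalently when $x$ is one of the splitting points $\ikspm{n-1}=\mu_{n+1,\,2k_{n-1}+1}$ at which the minimal-$k$ and maximal-$k$ choices of $k_n$ disagree, the strict/nonstrict conventions in the definitions of $\ChEps_n$ and of $k_n$ must be shown to be mutually consistent; this is where I would concentrate the remaining effort, since away from this countable set the interval-index argument is exact.
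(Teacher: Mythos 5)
Your proof is correct and takes essentially the same route as the paper: the paper's own proof consists of the one-line assertion that part~(i) follows from Remark~\ref{rem:2.11} together with the same induction for part~(ii) via $\dGt_1=\ChEps_1$ and $\Rot$-exponent bookkeeping, and your argument simply fills in the details the paper omits (the specialization $i=n-1$ with $k_{n,n}=\dGt_n$ and $\Rot_1=\Rot$, the identification of $\ChEps_n$ with the index of the level-$2$ interval, and the $n=1$ base case). The boundary subtlety you flag at points where $g^{n-1}(x)=v$ is likewise left untreated by the paper, so it does not constitute a divergence from, or a gap relative to, the paper's own proof.
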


\begin{proof}
Part (i) follow from Remark~\ref{rem:2.11}.

By definitions write
$$
\dGt_1 = \ChEps_1.
$$
Now, by~(i), obtain
$$
\dGt_2 =\Rot^{\ChEps_2}(\ChEps_1) = \Rot^{\ChEps_2 +\ChEps_1}(0).
$$ Thus, part~(ii) follows from~(i) by induction.
\end{proof}

\begin{lemma}
There is the correspondence between $g$-expansions and invariant
coordinates, precisely:

1. The rule of the construction of $\theta_{i-1}$ by $\dGt_i$ is
as below:

a. of  If $\dGt_i =0$, but not all $\{\dGt_k, k>i\}$ are zero,
then $\theta_{i-1} = -1$;

b. If $\dGt_k=0$ for all $k\geq i$, then $\theta_{i-1} = 0$.

c. If $\dGt_i = 1$, then $\theta_{i-1} = 1$.

2. The rule of the construction of $\dGt_{i+1}$ by $\theta_i$ is
as below: if $\theta_i < 1$, then $\dGt_{i+1} =0$, otherwise
$\dGt_{i+1} =1$.
\end{lemma}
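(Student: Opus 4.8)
The plan is to treat the statement as a single correspondence $\dGt_i \leftrightarrow \theta_{i-1}$ read in two directions: part~2 is merely the inverse of the rule in part~1 (namely $\theta_i = 1 \Leftrightarrow \dGt_{i+1} = 1$ and $\theta_i \in \{-1,0\} \Leftrightarrow \dGt_{i+1} = 0$), so I would establish this correspondence once and deduce both parts from it. First I would unwind $\theta_{i-1} = \prod_{j=0}^{i-1}\varepsilon(g^j(x))$ into its two ingredients: whether it vanishes, and, if not, its sign. Here $\theta_{i-1} = 0$ exactly when $g^j(x) = v$ for some $j \leq i-1$, and otherwise its sign is $(-1)^N$ with $N = \#\{\, j \leq i-1 : g^j(x) > v\,\}$.

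The bridge to the $g$-digit sequence is the equality $\mu_{2,1}(g) = v$, which holds because $g(v) = 1$ and $g(1) = 0$ force $v \in g^{-2}(0)$. By the definition of $\varrho$ this gives $\varrho_{j+1} = 1 \Leftrightarrow g^j(x) > v$, whence $N = \sum_{j=1}^{i}\varrho_j$ and the sign of $\theta_{i-1}$ is governed by the parity of this partial sum. Now Lemma~\ref{lema:3.1}(ii) reads $\dGt_i = \Rot^{\sum_{j=1}^i \varrho_j}(0)$, and since $\Rot^m(0)$ is $0$ or $1$ according to the parity of $m$, it says precisely that $\dGt_i = 1$ iff $\sum_{j=1}^i \varrho_j$ is odd. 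Comparing the two, the parity of $\sum_{j=1}^i \varrho_j$ — equivalently the value $\dGt_i$ — fixes the sign of $\theta_{i-1}$, and this is exactly what separates case~1(c) from cases~1(a)/1(b). This settles the whole statement away from the vanishing locus.

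What remains, and what I expect to be the main obstacle, is to separate case~1(a) from case~1(b), that is, to match the vanishing $\theta_{i-1} = 0$ with the tail condition ``$\dGt_k = 0$ for all $k \geq i$''. The dynamical input is that $g^j(x) = v$ forces $g^{j+1}(x) = 1$, $g^{j+2}(x) = 0$, and $g^k(x) = 0$ for every $k \geq j+2$, so once the orbit meets the turning point it is absorbed at $0$; conversely an eventually-zero orbit must have passed through $v$. The delicate point is converting ``the orbit is eventually $0$'' into the exact index statement, since the points whose orbit hits $v$ are precisely the ambiguous points at which the $g$-decomposition can stabilize, and one must pin down the first index at which $\theta_{i-1}$ vanishes and check it against the first index from which $(\dGt_k)$ is identically zero. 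I would carry this out by running the computation of the previous paragraph from the first time the orbit meets $v$, and tracking through Lemma~\ref{lema:3.1}(ii) how $\varrho$, and hence $\dGt$, behaves from that moment on. Once the vanishing is matched, part~2 is immediate: $\theta_i = 1$ forces $\dGt_{i+1} = 1$ by~1(c), while $\theta_i \in \{-1,0\}$ forces $\dGt_{i+1} = 0$ by~1(a)--1(b).
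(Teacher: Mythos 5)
The paper states this lemma without any proof at all, so there is no argument on the paper's side to compare with; judged on its own, your outline picks the right machinery but fails at the one step it declines to carry out. Your first half is correct and essentially forced: $\mu_{2,1}(g)=v$ (your justification is right), hence $\ChEps_{j+1}=1$ if and only if $g^j(x)>v$, hence, as long as the orbit avoids $v$, $\theta_{i-1}=(-1)^N$ with $N=\sum_{j=1}^{i}\ChEps_j$, whose parity equals $\dGt_i$ by Lemma~\ref{lema:3.1}(ii). But at the decisive moment you only assert that this parity ``fixes the sign of $\theta_{i-1}$, and this is exactly what separates case 1(c) from cases 1(a)/1(b)'' without substituting the values. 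Doing so with the paper's convention ($\varepsilon(t)=+1$ for $t<v$, $-1$ for $t>v$) yields $\theta_{i-1}=(-1)^{\dGt_i}$, i.e.\ $\dGt_i=1$ forces $\theta_{i-1}=-1$ --- the \emph{opposite} of rule 1(c). Concretely, for the tent map and $x=1/3$ one has $\dGt=(0,1,0,1,\dots)$ and $\theta=(+1,-1,+1,\dots)$, whereas rules 1(a),(c) predict $(-1,+1,-1,\dots)$. So your computation, completed honestly, contradicts the statement as printed; either the lemma's signs or the paper's definition of $\varepsilon$ must be flipped, and since these signs are the entire content of part~1, leaving the comparison implicit is a genuine gap, not a formality.

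The vanishing case, which you explicitly defer, also cannot be closed along the lines you sketch. Your converse claim that ``an eventually-zero orbit must have passed through $v$'' fails at $x=0$ and $x=1$, and $x=0$ (where $\dGt_k=0$ for all $k$) falsifies rule 1(b) outright: its $\theta_{i-1}$ equals $+1$ for every $i$ (and $(-1)^i$ under the flipped convention), never $0$. The index matching breaks as well: for the tent map and $x=1/4$ the orbit reaches $v$ at time $1$, so $\theta_n=0$ for all $n\geq 1$, yet with the maximal-$k$ convention of \cite[Notation~4.4]{Chaos} one has $\dGt=(0,1,0,0,\dots)$, so rule 1(c) predicts $\theta_1=+1$; with the minimal-$k$ convention of Notation~\ref{not:2.1} (the paper uses both, inconsistently) such endpoints acquire a tail of $1$'s and case 1(b) never applies to them at all. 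So the ``main obstacle'' you anticipated is not bookkeeping to be tracked through Lemma~\ref{lema:3.1}: the statement itself needs repairing (a fixed endpoint convention, an exclusion or separate treatment of $x\in\{0,1\}$ and of the preimages of $0$, and a corrected index at which $\theta$ begins to vanish) before any proof of cases 1(a)/1(b) can exist.
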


\section{Skew tent maps}\label{skew}

We will specify in this section the results about $g$-expansion of
carcass maps to the case of skew carcass map. Till the end of this
section let fix arbitrary number $v\in (0, 1)$, skew tent map
$f_v$, and $x\in [0, 1]$ with $f_v$-decomposition $(\dGt_i)_{i\geq
0}$. We will show in this section that some our results
of~\cite{Chaos}, which we have obtained about skew tent maps, are
partial case of more general reasonings about unimodal maps.

\begin{lemma}\cite[Lema~8]{Chaos}\label{lema:4.1}
For every $n\geq 1$ we have that
$$\delta_{k_n} =\Rot^{\dGt_n}(v).$$
\end{lemma}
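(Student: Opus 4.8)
The plan is to read $\delta_{k_n}$ as $\delta_{n+1,k_n}$ and to deduce the statement as a direct specialization of Remark~\ref{rem:2.12} to the skew tent map $f_v$. The point is that Remark~\ref{rem:2.12} already expresses an arbitrary $\delta_{n+1,k_n}$ through a single base-level quantity, twisted by a power of $\Rot$; for the map $f_v$ the only remaining task is to read off the numerical value of that base quantity, and this is exactly where the parameter $v$ enters the formula.

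First I would invoke Remark~\ref{rem:2.12} with $i=n$ and $k=k_n$. This gives $\delta_{n+1,k_n} = \Rot^{\dGt_n}\!\left(\delta_{1,\Rot_1^{\dGt_n}(k_{n,n})}\right)$. Next I would simplify the inner index: from $k_{i,j}=\sum_{t=i}^{j}\dGt_t 2^{j-t}$ one reads off $k_{n,n}=\dGt_n$, and since $\Rot_1(t)=1-t$ a case check over $\dGt_n\in\{0,1\}$ shows $\Rot_1^{\dGt_n}(\dGt_n)=0$ in both cases. Hence the formula collapses to $\delta_{n+1,k_n}=\Rot^{\dGt_n}(\delta_{1,0})$, reducing the whole statement to a computation of the single base value $\delta_{1,0}$.

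It then remains to compute $\delta_{1,0}$ for $f_v$, which is the genuinely skew-tent-specific step and the only real obstacle. Here I would use that $f_v^{-1}(0)=\{0,1\}$, so $\mu_{1,0}=0$ and $\mu_{1,1}=1$, while the central second-level preimage $\mu_{2,1}$ is the peak $v$ (indeed $f_v(v)=1$ and $f_v(1)=0$, so $f_v^2(v)=0$, and $v$ is the middle point of $f_v^{-2}(0)$). By Notation~\ref{not:2.4} this yields $\delta_{1,0}=\frac{\mu_{2,1}-\mu_{1,0}}{\mu_{1,1}-\mu_{1,0}}=v$. Substituting into the collapsed formula gives $\delta_{n+1,k_n}=\Rot^{\dGt_n}(v)$, as required. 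I expect no further difficulty beyond this, since the entire combinatorial reduction is supplied by Remark~\ref{rem:2.12}, and only the identity $\delta_{1,0}=v$ must be verified directly from the geometry of $f_v$.
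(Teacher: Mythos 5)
Your proposal is correct and is essentially the paper's own proof: the paper likewise specializes Remark~\ref{rem:2.12} at $i=n$ and concludes $\delta_{n+1,k_n}=\Rot^{\dGt_n}(v)$. You merely make explicit the steps the paper leaves implicit, namely $k_{n,n}=\dGt_n$, the case check $\Rot_1^{\dGt_n}(\dGt_n)=0$, and the computation $\delta_{1,0}=v$ from $\mu_{1,0}=0$, $\mu_{1,1}=1$, $\mu_{2,1}=v$.
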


\begin{proof}
Plug $i=n$ into Remark~\ref{rem:2.12} and obtain $$
\delta(I_{n+1,k}) = \Rot^{\dGt_{n}}(
\delta(I_{1,\Rot_{1}^{\dGt_{n}}(\dGt_n)})) = \Rot^{\dGt_n}(v)$$
and we are done.
\end{proof}

\begin{remark}\label{rem:4.2}
Notice that Lemma~\ref{lema:4.1} (i.e. Lemma~8 in~\cite{Chaos})
was formulated in~\cite{Chaos} as follows: for each $n\geq 1$ the
equality $\ikspm{n} -\iks{n} = (\iksp{n} -\iks{n})\cdot
\myae(\dGt_n, 0)$ holds, where $$ \myae(a,\, b) = \left\{
\begin{array}{ll} v & \text{if
}a=b,\\
1-v& \text{if }a\neq b\end{array}\right. $$ for all $a, b\in
\{0,\, 1\}$.
\end{remark}

\begin{remark}\cite[Lemma~11]{Chaos}
\label{rem:4.3} For every $n\geq 1$ we have
$$ \lI_{n+1,k_n} = \lI_{n,k_{n-1}}\cdot\Rot^{\dGt_{n-1} +\dGt_n}(v).
$$
\end{remark}

\begin{proof}
By (ii) of Remark~\ref{rem:2.8} write
$$ \lI_{n+1,k_n} = %
\lI_{n,k_{n-1}}\cdot \Rot^{\dGt_n}(\delta_{k_{n-1}}) %
\stackrel{\text{Lem.~\ref{lema:4.1}}}{=}
\lI_{n,k_{n-1}}\cdot\Rot^{\dGt_{n-1} +\dGt_n}(v).
$$\end{proof}

\begin{remark}
Notice that Lemma~\ref{rem:4.3} (i.e. Lemma~11 in~\cite{Chaos})
was formulated in~\cite{Chaos} as follows: for every $n\geq 1$ we
have $\iksp{n+1} -\iks{n+1} = (\iksp{n} -\iks{n})\cdot
\myae(\dGt_n, \dGt_{n+1})$, where $\myae$ means the same as in
Remark~\ref{rem:4.2}.
\end{remark}

We will need the next fact for our further computations.

\begin{lemma}\label{rem:4.5}
For every $n\geq 1$ we have $$ \lI_{n+1,k_n} =
\prod\limits_{i=0}^{n-1} \Rot^{\dGt_i +\dGt_{i+1}}(v).
$$
\end{lemma}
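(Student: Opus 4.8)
The plan is to prove this by induction on $n$, using Remark~\ref{rem:4.3} as the recurrence that drives the induction. The statement to prove is
$$
\lI_{n+1,k_n} = \prod_{i=0}^{n-1}\Rot^{\dGt_i+\dGt_{i+1}}(v),
$$
so the natural strategy is to check a base case and then show that multiplying the product for $n$ by the single factor supplied by Remark~\ref{rem:4.3} yields the product for $n+1$.

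First I would treat the base case $n=1$. Here the claimed right-hand side is the single-factor product $\Rot^{\dGt_0+\dGt_1}(v)$. Since the $g$-decomposition always has $\dGt_0=0$, this reduces to $\Rot^{\dGt_1}(v)$, and I would verify directly that $\lI_{2,k_1}=\Rot^{\dGt_1}(v)$; this is essentially the $n=1$ instance of the relation in Remark~\ref{rem:4.3} together with the fact that $\lI_{1,k_0}=\lI_{1,0}=1$, since $\mu_{1,0}=0$ and $\mu_{1,1}=1$ give the whole interval $[0,1]$ of length one.

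For the inductive step, assume the formula holds for $n$. By Remark~\ref{rem:4.3},
$$
\lI_{n+2,k_{n+1}} = \lI_{n+1,k_n}\cdot \Rot^{\dGt_n+\dGt_{n+1}}(v),
$$
so substituting the inductive hypothesis for $\lI_{n+1,k_n}$ gives
$$
\lI_{n+2,k_{n+1}} = \left(\prod_{i=0}^{n-1}\Rot^{\dGt_i+\dGt_{i+1}}(v)\right)\cdot \Rot^{\dGt_n+\dGt_{n+1}}(v) = \prod_{i=0}^{n}\Rot^{\dGt_i+\dGt_{i+1}}(v),
$$
which is exactly the claimed formula with $n$ replaced by $n+1$. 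This closes the induction.

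I do not anticipate a serious obstacle here, since Remark~\ref{rem:4.3} already packages the hard analytic content (the dependence of consecutive interval lengths on the decomposition digits via $\Rot$) into a clean one-step recurrence. The only point demanding a little care is the indexing at the start: one must confirm that the exponent $\dGt_n+\dGt_{n+1}$ in Remark~\ref{rem:4.3} aligns with the top factor of the telescoping product, and that the base case correctly absorbs the $i=0$ term using $\dGt_0=0$ and $\lI_{1,0}=1$. Once these bookkeeping details are fixed, the telescoping is immediate and the result follows.
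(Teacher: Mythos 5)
Your proof is correct and follows essentially the same route as the paper: the paper also derives the product formula from Lemma~\ref{rem:4.3} by induction on $n$ (merely writing the product with reversed indices and then reindexing), and your extra care with the base case ($\dGt_0=0$, $\lI_{1,0}=\mu_{1,1}-\mu_{1,0}=1$) only makes explicit what the paper leaves implicit.
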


\begin{proof}
It follows from Lemma~\ref{rem:4.3} by induction on $n$  that
$\lI_{n+1,k_n} = \prod\limits_{i=1}^{n} \Rot^{\dGt_{n+1-i}
+\dGt_{n-i}}(v).$ Now, change $i$ to $n-i$, and we are done.
\end{proof}

We are ready now to prove Theorem~\ref{th:4}.

\begin{proof}[Proof of Theorem~\ref{th:4}]
Keeping in mind Lemma~\ref{rem:4.5}, notice that $\Rot^{\dGt_i
+\dGt_{i+1}}(v)$ in the product $\prod\limits_{i=0}^{n-1}
\Rot^{\dGt_i +\dGt_{i+1}}(v)$ equals either $v$, or $1-v$ and,
moreover the exponent of $(1-v)$ in the product is
$\sum\limits_{i=0}^{n-1} |\dGt_{i+1} -\dGt_i|$. Now, simplify
$$
\sum\limits_{i=0}^{n-1}  |\dGt_{i+1} -\dGt_i| =
\sum\limits_{i=0}^{n-1} \Rot^{\dGt_i}(\dGt_{i+1})
\stackrel{\text{Lem.~\ref{lema:3.1}}}{=} \sum\limits_{i=0}^{n-1}
\varrho_{i+1},
$$ and we are done by Lemma~\ref{rem:4.5}.
\end{proof}



\section{Explicit formulas for the conjugacy}\label{explicit}

\begin{lemma}\label{lema:5.1}
Let $(\dGt_i)_{i\geq 0}$ be the $g$-decomposition of a number
$x\in [0, 1]$. Then for every $n\geq 1$ we have that:

(i) $\iks{n} =\sum\limits_{i=1}^n \dGt_i\cdot
\delta_{k_{i-1}}\cdot \lI_{i,k_{i-1}}$

(ii) $ \iksp{n} = 1 -\sum\limits_{i=1}^n (1-\dGt_i)\cdot
(1-\delta_{k_{i-1}})\cdot \lI_{i,k_{i-1}};$

(iii) $ \ikspm{n} = v + \sum\limits_{i=1}^n (-1)^{1+\dGt_i}\cdot
\Rot^{1+\dGt_i}(\delta_{k_i})\cdot \lI_{i+1,k_i}.$
\end{lemma}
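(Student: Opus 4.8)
The plan is to prove the three identities together by induction on $n$, since each is just the ``summed-up'' form of an increment recurrence already established in Section~2: part~(i) telescopes Lemma~\ref{lema:2.9}(i), part~(ii) telescopes Lemma~\ref{lema:2.9}(ii), and part~(iii) telescopes Lemma~\ref{lema:2.10}. Throughout I use the one-subscript shorthand $\dk{k_{n}}$ for $\dI{n+1}{k_n}$, as in Lemma~\ref{lema:4.1}, so that the increment produced by each recurrence lines up with a single summand of the corresponding formula.

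First I would record the level-$0$ evaluations, which serve as the base of the induction. From $g^{-1}(0)=\{0,1\}$ we get $\mu_{1,0}=0$ and $\mu_{1,1}=1$, and from $g^{-2}(0)=\{0,v,1\}$ we get $\mu_{2,1}=v$; since $x\in[0,1]=[\mu_{1,0},\mu_{1,1}]$ the minimal index is $k_0=0$. Hence, by Notation~\ref{not:2.1}, $\iks{0}=\mu_{1,0}=0$, $\iksp{0}=\mu_{1,1}=1$ and $\ikspm{0}=\mu_{2,1}=v$. With the (empty) sum over $1\leq i\leq 0$ discarded, these are exactly the constant terms in~(i), (ii) and~(iii); in particular this explains why the free term in~(iii) is $v$ and not $0$.

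For the inductive step I would assume the formulas at level $n$ and add the new summand supplied by the matching recurrence. For~(i), Lemma~\ref{lema:2.9}(i) gives $\iks{n+1}=\iks{n}+\dGt_{n+1}\cdot\dk{k_n}\cdot\lI_{n+1,k_n}$, whose added term is precisely the $i=n+1$ term of the sum in~(i), so the formula propagates to level $n+1$. The reasoning for~(ii) and~(iii) is identical, via Lemma~\ref{lema:2.9}(ii) and Lemma~\ref{lema:2.10}; in~(iii) the new summand is $(-1)^{1+\dGt_{n+1}}\cdot\Rot^{1+\dGt_{n+1}}(\dk{k_{n+1}})\cdot\lI_{n+2,k_{n+1}}$, matching the $i=n+1$ term.

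I do not anticipate a genuine obstacle, as all the analytic content is already carried by the increment lemmas of Section~2; what remains is bookkeeping. The only points needing care are clerical: aligning the shorthand $\dk{\cdot}$ with $\dI{\cdot}{\cdot}$ so each increment matches the correct summand, and handling the first step of the induction correctly --- the recurrences of Lemma~\ref{lema:2.9} and Lemma~\ref{lema:2.10} are stated for $n\geq 1$, so one must either check that they remain valid at $n=0$ (the underlying evaluations in Remark~\ref{rem:2.8} reducing at level $0$ to the elementary values above) or else verify the case $n=1$ by hand and run the recurrences for $n\geq 1$.
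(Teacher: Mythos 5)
Your proposal is correct and follows essentially the same route as the paper: the paper's own proof simply records the base values $\iks{0}=0$, $\ikspm{0}=v$, $\iksp{0}=1$ and then telescopes Lemmas~\ref{lema:2.9} and~\ref{lema:2.10}, exactly as you do. Your additional care with the base-case computation and the $n\geq 1$ range of the recurrences only makes explicit what the paper leaves implicit.
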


\begin{proof}
Notice that $\iks{0} = 0$; $\ikspm{0} =v$ and $\iksp{0} =1$. Then,
the lemma follows from Lemmas~\ref{lema:2.9} and~\ref{lema:2.10}.
\end{proof}

\begin{lemma}
For every $n\geq 1$ we have
$$ \iks{n} =
\dGt_1\cdot v +\sum\limits_{n=2}^\infty \dGt_n\cdot
\Rot^{\dGt_{n-1}}(v) \cdot \prod\limits_{i=0}^{n-2} \Rot^{\dGt_i
+\dGt_{i+1}}(v).
$$
\end{lemma}

\begin{proof} By~(i) of Lemma~\ref{lema:5.1},
$$ \iks{n} =
\sum\limits_{i=1}^n \dGt_i\cdot \delta_{k_{i-1}}\cdot
\lI_{i,k_{i-1}}(x) \stackrel{\text{Lem.~\ref{lema:4.1}}}{=}
\dGt_1\cdot v +\sum\limits_{i=2}^n \dGt_i\cdot
\Rot^{\dGt_{i-1}}(v) \cdot \lI_{i,k_{i-1}}(x),
$$
and the lemma follows from Lemma~\ref{rem:4.5}.
\end{proof}

\begin{lemma}
For any $v\in (0, 1)$, any $x\in (0, 1)$ with $f_v$-digit sequence
$(\ChEps_i)_{i\geq 1}$ the equality $$ \iks{n} = \ChEps_1\cdot v
+\sum\limits_{i=2}^n \Rot^{\sum\limits_{j=1}^i \ChEps_j}(0) \cdot
\Rot^{\sum\limits_{j=1}^{i-1}\ChEps_j}(v) \cdot
(1-v)^{\sum\limits_{j=1}^{i-1} \varrho_j}\cdot
v^{i-1-\sum\limits_{j=1}^{i-1} \varrho_j}
$$ holds.
\end{lemma}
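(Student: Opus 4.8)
The plan is to start from the preceding lemma, which already records
$$
\iks{n} = \dGt_1\cdot v +\sum_{i=2}^n \dGt_i\cdot \Rot^{\dGt_{i-1}}(v)\cdot \prod_{t=0}^{i-2}\Rot^{\dGt_t+\dGt_{t+1}}(v),
$$
and to translate each of the three factors of the generic summand from the $g$-decomposition $(\dGt_i)$ into the digit sequence $(\ChEps_i)$. Since the asserted formula also has exactly three factors per term, the whole argument is a term-by-term substitution, carried out using the dictionary of Lemma~\ref{lema:3.1}, together with Lemma~\ref{rem:4.5} and Theorem~\ref{th:4} for the product factor.

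Two of the three translations I expect to be immediate. For the leading term, the identity $\dGt_1=\ChEps_1$ established in the proof of Lemma~\ref{lema:3.1} gives $\dGt_1\cdot v=\ChEps_1\cdot v$. For the factor $\dGt_i$, part~(ii) of Lemma~\ref{lema:3.1} yields $\dGt_i=\Rot^{\sum_{j=1}^i\ChEps_j}(0)$, which is already the first factor under the sum in the statement. For the product factor, I would apply Lemma~\ref{rem:4.5} with $n$ replaced by $i-1$ to recognize $\prod_{t=0}^{i-2}\Rot^{\dGt_t+\dGt_{t+1}}(v)=\lI_{i,k_{i-1}}$, and then Theorem~\ref{th:4}, again with $n=i-1$, to evaluate this as $(1-v)^{\sum_{j=1}^{i-1}\varrho_j}\cdot v^{(i-1)-\sum_{j=1}^{i-1}\varrho_j}$, which is exactly the last two factors of the claim.

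The one delicate point is the middle factor $\Rot^{\dGt_{i-1}}(v)$. Substituting $\dGt_{i-1}=\Rot^{\sum_{j=1}^{i-1}\ChEps_j}(0)$ from Lemma~\ref{lema:3.1}(ii) turns it into $\Rot^{\Rot^{\sum_{j=1}^{i-1}\ChEps_j}(0)}(v)$, and I want to collapse this nested rotation to $\Rot^{\sum_{j=1}^{i-1}\ChEps_j}(v)$ by Remark~\ref{rem:2.7}. The subtlety is that Remark~\ref{rem:2.7} is stated for an exponent in $\{0,1\}$, whereas $\sum_{j=1}^{i-1}\ChEps_j$ can be an arbitrary nonnegative integer. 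Because $\Rot$ is an involution, $\Rot^m$ depends only on the parity of $m$; so I would first reduce the exponent modulo $2$, observe that $\dGt_{i-1}\in\{0,1\}$ already equals that residue, apply Remark~\ref{rem:2.7} to the reduced exponent, and then lift back, again by parity, to $\Rot^{\sum_{j=1}^{i-1}\ChEps_j}(v)$. This parity bookkeeping is the only place that needs an argument beyond a bare citation. Once the three translated factors are assembled term by term and added to the leading term, the stated equality follows with no remaining computation.
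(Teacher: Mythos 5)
Your proposal is correct and follows essentially the same route as the paper's own proof: reduce to the $\dGt$-formula coming from Lemma~\ref{lema:5.1}(i) and Lemma~\ref{lema:4.1}, substitute $\dGt_i=\Rot^{\sum_{j=1}^{i}\ChEps_j}(0)$ via Lemma~\ref{lema:3.1}, collapse the nested rotation $\Rot^{\Rot^{\sum_{j=1}^{i-1}\ChEps_j}(0)}(v)$ by Remark~\ref{rem:2.7}, and evaluate $\lI_{i,k_{i-1}}$ by Theorem~\ref{th:4} (equivalently via Lemma~\ref{rem:4.5}). The only divergence is your explicit parity argument extending Remark~\ref{rem:2.7} to exponents outside $\{0,1\}$ --- a point the paper invokes without comment --- and that extra care is sound, since $\Rot$ is an involution.
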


\begin{proof}

By~(i) of Lemma~\ref{lema:5.1}, $$ \iks{n} = \sum\limits_{i=1}^n
\dGt_i\cdot \delta_{k_{i-1}}\cdot \lI_{i,k_{i-1}}(x)
\stackrel{\text{Lem.~\ref{lema:4.1}}}{=}
\dGt_1\cdot v +\sum\limits_{i=2}^n \dGt_i\cdot
\Rot^{\dGt_{i-1}}(v) \cdot \lI_{i,k_{i-1}}(x)
\stackrel{\text{Lem.~\ref{lema:3.1}}}{=}
$$$$
= \varrho_1\cdot v +\sum\limits_{i=2}^n
\Rot^{\sum\limits_{j=1}^i\ChEps_j}(0)\cdot
\Rot^{\Rot^{\sum\limits_{j=1}^{i-1}\ChEps_j}(0)}(v) \cdot
\lI_{i,k_{i-1}}(x) \stackrel{\text{Rem.~\ref{rem:2.7}}}{=}
$$$$= \varrho_1\cdot v +\sum\limits_{i=2}^n
\Rot^{\sum\limits_{j=1}^i\ChEps_j}(0)\cdot
\Rot^{\sum\limits_{j=1}^{i-1}\ChEps_j}(v) \cdot
\lI_{i,k_{i-1}}(x),
$$ and we are done by Theorem~\ref{th:4}.
\end{proof}

\begin{lemma}
Let $x\in [0, 1]$ has $f_v$-digits $(\varrho_i)_{i\geq 1}$. Then
$$\iksp{n} =1 -(1-\dGt_1)\cdot (1-v) -\sum\limits_{i=2}^n (1-\dGt_i)\cdot
\Rot^{1+\dGt_{i-1}}(v)\cdot \prod\limits_{j=0}^{i-2} \Rot^{\dGt_j
+\dGt_{j+1}}(v)$$
\end{lemma}

\begin{proof}
By~(ii) of Lemma~\ref{lema:5.1}, $$ \iksp{n} = 1
-\sum\limits_{i=1}^n (1-\dGt_i)\cdot (1-\delta_{k_{i-1}})\cdot
\lI_{i,k_{i-1}} \stackrel{\text{Lem.~\ref{lema:4.1}}}{=}
$$$$
= 1 -\sum\limits_{i=1}^n (1-\dGt_i)\cdot
(1-\Rot^{\dGt_{i-1}}(v))\cdot \lI_{i,k_{i-1}}
\stackrel{\text{Lem.~\ref{rem:4.5}}}{=}
$$$$
= 1 -(1-\dGt_1)\cdot (1-v) -\sum\limits_{i=2}^n (1-\dGt_i)\cdot
\Rot^{1+\dGt_{i-1}}(v)\cdot \prod\limits_{j=0}^{i-2} \Rot^{\dGt_j
+\dGt_{j+1}}(v)
$$ and we are done.
\end{proof}

\begin{lemma}
Let $x\in [0, 1]$ has $f_v$-digits $(\varrho_i)_{i\geq 1}$. Then
$$\iksp{n} =1 -(1-\ChEps_1)\cdot (1-v) -\sum\limits_{i=2}^n
\Rot^{1+\sum\limits_{j=1}^i\ChEps_j}(0)\cdot
\Rot^{1+\sum\limits_{j=1}^{i-1}\ChEps_j}(v)\cdot
(1-v)^{\sum\limits_{i=1}^{n-1} \ChEps_i}\cdot
v^{n-1-\sum\limits_{i=1}^{n-1} \ChEps_i}.$$
\end{lemma}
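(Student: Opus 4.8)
The plan is to take the formula for $\iksp{n}$ in terms of the $g$-decomposition $(\dGt_i)$ established in the previous lemma and translate it, factor by factor, into the digit sequence $(\ChEps_i)$, in exact parallel with the computation just carried out for $\iks{n}$. Concretely, I would start from
$$\iksp{n} = 1 - (1-\dGt_1)\cdot(1-v) - \sum\limits_{i=2}^n (1-\dGt_i)\cdot \Rot^{1+\dGt_{i-1}}(v)\cdot \prod\limits_{j=0}^{i-2}\Rot^{\dGt_j+\dGt_{j+1}}(v)$$
and rewrite each of the three $\dGt$-dependent pieces using Lemma~\ref{lema:3.1}, Remark~\ref{rem:2.7}, Lemma~\ref{rem:4.5} and Theorem~\ref{th:4}.

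First I would dispose of the isolated term: since $\dGt_1 = \ChEps_1$, it becomes $(1-\ChEps_1)\cdot(1-v)$ immediately. For the factors inside the sum I would proceed as follows. For $1-\dGt_i$, Lemma~\ref{lema:3.1}(ii) gives $\dGt_i = \Rot^{\sum\limits_{j=1}^i \ChEps_j}(0)$, so applying $\Rot$ once more yields $1-\dGt_i = \Rot^{1+\sum\limits_{j=1}^i \ChEps_j}(0)$, which is exactly the leading factor of the asserted summand. For $\Rot^{1+\dGt_{i-1}}(v)$ I would substitute $\dGt_{i-1} = \Rot^{\sum\limits_{j=1}^{i-1}\ChEps_j}(0)$ and then collapse the inner $\Rot^{\,\cdot}(0)$ appearing in the exponent by Remark~\ref{rem:2.7}, obtaining $\Rot^{1+\sum\limits_{j=1}^{i-1}\ChEps_j}(v)$. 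Finally, the product $\prod\limits_{j=0}^{i-2}\Rot^{\dGt_j+\dGt_{j+1}}(v)$ is recognized as $\lI_{i,k_{i-1}}$ via Lemma~\ref{rem:4.5}, and Theorem~\ref{th:4} evaluates it as $(1-v)^{\sum\limits_{j=1}^{i-1}\varrho_j}\cdot v^{(i-1)-\sum\limits_{j=1}^{i-1}\varrho_j}$. Assembling the three rewritten factors reproduces the summand, and the lemma follows.

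There is no real obstacle here: the argument is a direct symbolic substitution that mirrors the preceding $\iks{n}$ computation almost line for line, so the only care needed is in the bookkeeping of the $\Rot$-exponents — keeping the two extra $+1$ shifts (one from $1-\dGt_i$, one from $\Rot^{1+\dGt_{i-1}}$) correctly attached, and matching the powers of $1-v$ and $v$ delivered by Theorem~\ref{th:4} against those in the statement. I would emphasize one point worth checking: the factor $\lI_{i,k_{i-1}}$ depends on the summation index $i$, so the exponents of $1-v$ and $v$ inside the sum are $\sum\limits_{j=1}^{i-1}\varrho_j$ and $(i-1)-\sum\limits_{j=1}^{i-1}\varrho_j$, running up to $i-1$ rather than being fixed; this is the natural form of the summand, in agreement with the analogous lemma for $\iks{n}$.
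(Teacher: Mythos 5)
Your proof is correct and takes essentially the same route as the paper: the paper starts one step earlier, from (ii) of Lemma~\ref{lema:5.1}, and performs the identical chain of substitutions (Lemma~\ref{lema:4.1} for $\delta_{k_{i-1}}=\Rot^{\dGt_{i-1}}(v)$, Theorem~\ref{th:4} for $\lI_{i,k_{i-1}}$, Lemma~\ref{lema:3.1} to pass from $\dGt_i$ to $\ChEps_i$, and Remark~\ref{rem:2.6} to collapse the nested exponent), whereas you start from the already-derived $\dGt$-form of the preceding lemma, which is merely an intermediate line of the same computation. Your closing observation is also correct: as printed, the factors $(1-v)^{\sum\limits_{i=1}^{n-1}\ChEps_i}\cdot v^{n-1-\sum\limits_{i=1}^{n-1}\ChEps_i}$ clash with the outer summation index and should read $(1-v)^{\sum\limits_{j=1}^{i-1}\ChEps_j}\cdot v^{i-1-\sum\limits_{j=1}^{i-1}\ChEps_j}$, an index slip that the paper's own proof carries through unchanged and that your version fixes.
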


\begin{proof}
By~(ii) of Lemma~\ref{lema:5.1}, $$ \iksp{n} = 1
-\sum\limits_{i=1}^n (1-\dGt_i)\cdot (1-\delta_{k_{i-1}})\cdot
\lI_{i,k_{i-1}} \stackrel{\text{Lem.~\ref{lema:4.1}}}{=}
$$$$
= 1 -\sum\limits_{i=1}^n (1-\dGt_i)\cdot
(1-\Rot^{\dGt_{i-1}}(v))\cdot \lI_{i,k_{i-1}}
\stackrel{\text{Th.~\ref{th:4}}}{=}
$$$$= 1 -(1-\dGt_1)\cdot (1-v) -\sum\limits_{i=2}^n (1-\dGt_i)\cdot
\Rot^{1+\dGt_{i-1}}(v)\cdot (1-v)^{\sum\limits_{i=1}^{n-1}
\varrho_i}\cdot v^{n-1-\sum\limits_{i=1}^{n-1} \varrho_i}
\stackrel{\text{Lem.~\ref{lema:3.1}}}{=}
$$$$
=1 -(1-\ChEps_1)\cdot (1-v) -\sum\limits_{i=2}^n
\Rot^{1+\sum\limits_{j=1}^i\ChEps_j}(0)\cdot
\Rot^{1+\Rot^{\sum\limits_{j=1}^{i-1}\ChEps_j}(0)}(v)\cdot
(1-v)^{\sum\limits_{i=1}^{n-1} \ChEps_i}\cdot
v^{n-1-\sum\limits_{i=1}^{n-1}
\ChEps_i}\stackrel{\text{Rem.~\ref{rem:2.6}}}{=}
$$$$
=1 -(1-\ChEps_1)\cdot (1-v) -\sum\limits_{i=2}^n
\Rot^{1+\sum\limits_{j=1}^i\ChEps_j}(0)\cdot
\Rot^{1+\sum\limits_{j=1}^{i-1}\ChEps_j}(v)\cdot
(1-v)^{\sum\limits_{i=1}^{n-1} \ChEps_i}\cdot
v^{n-1-\sum\limits_{i=1}^{n-1} \ChEps_i},
$$ which is necessary.
\end{proof}

\begin{lemma}
Let $x\in [0, 1]$ has $f_v$-digits $(\varrho_i)_{i\geq 1}$. Then
$$
\ikspm{n} = v + \sum\limits_{i=1}^{n} (-1)^{\dGt_i+1}\cdot
\Rot^{\dGt_i+1 +\dGt_i}(v)\cdot \prod\limits_{i=0}^{n-1}
\Rot^{\dGt_i +\dGt_{i+1}}(v)
$$
\end{lemma}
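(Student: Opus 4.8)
The plan is to follow the same substitution pattern that yields the companion formulas for $\iks{n}$ and $\iksp{n}$ earlier in this section, but starting this time from part~(iii) of Lemma~\ref{lema:5.1}. That part already expresses $\ikspm{n}$ as $v$ plus a finite sum whose $i$-th term is built from the two factors $\Rot^{1+\dGt_i}(\delta_{k_i})$ and $\lI_{i+1,k_i}$, so the whole argument reduces to rewriting each of these factors by results already established for skew tent maps, exactly as in the preceding lemmas.

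First I would invoke part~(iii) of Lemma~\ref{lema:5.1} to write $\ikspm{n} = v + \sum_{i=1}^n (-1)^{1+\dGt_i}\cdot \Rot^{1+\dGt_i}(\delta_{k_i})\cdot \lI_{i+1,k_i}$. Next I would substitute $\delta_{k_i} = \Rot^{\dGt_i}(v)$, which is precisely Lemma~\ref{lema:4.1}, and collapse the two iterates of $\Rot$ by $\Rot^{1+\dGt_i}(\Rot^{\dGt_i}(v)) = \Rot^{\dGt_i+1+\dGt_i}(v)$, using only that composing $\Rot^{a}$ with $\Rot^{b}$ adds the exponents. Finally I would replace $\lI_{i+1,k_i}$ with the product $\prod_{j=0}^{i-1}\Rot^{\dGt_j+\dGt_{j+1}}(v)$ furnished by Lemma~\ref{rem:4.5}. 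Assembling the three substitutions term by term inside the sum delivers precisely the asserted expression, with the sign $(-1)^{\dGt_i+1}$, the factor $\Rot^{\dGt_i+1+\dGt_i}(v)$, and the product over $\dGt_j+\dGt_{j+1}$ all in place.

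Since each step is a direct appeal to an identity already proved for every $n\geq 1$, I expect no genuine obstacle; the argument needs no induction and only careful index bookkeeping. The one subtlety is purely notational: the inner product must be taken over a fresh index running up to $i-1$ (tied to the summation index $i$), not over the outer index as the displayed statement literally reads, so I would treat the repeated letter $i$ in both the sum and the product as a typographical slip and write the product with a new letter $j$ ranging from $0$ to $i-1$.
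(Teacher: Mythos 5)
Your proposal is correct and takes essentially the same route as the paper's own proof: invoke part~(iii) of Lemma~\ref{lema:5.1}, rewrite $\delta_{k_i}=\Rot^{\dGt_i}(v)$ by Lemma~\ref{lema:4.1} (collapsing the iterates of the involution $\Rot$ by adding exponents), and substitute $\lI_{i+1,k_i}$ via Lemma~\ref{rem:4.5}. Your remark that the product in the displayed statement must be read with a fresh index $j$ ranging from $0$ to $i-1$ (tied to the summation index) is also right, as the paper's subsequent lemma, which replaces this product by $(1-v)^{\sum_{j=1}^{i}\varrho_j}\cdot v^{\,i-\sum_{j=1}^{i}\varrho_j}$, confirms.
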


\begin{proof}
By~(iii) of Lemma~\ref{lema:5.1}, $$ \ikspm{n} = v+
\sum\limits_{i=1}^{n} (-1)^{\dGt_i+1}\cdot
\Rot^{\dGt_i+1}(\delta_{i+1,k_i})\cdot \lI_{i+1}
\stackrel{\text{by Lem.~\ref{lema:4.1}}}{=}$$$$ =v +
\sum\limits_{i=1}^{n} (-1)^{\dGt_i+1}\cdot \Rot^{\dGt_i+1
+\dGt_i}(v)\cdot \lI_{i+1},
$$
and the lemma follows from Lemma~\ref{rem:4.5}.
\end{proof}

\begin{lemma}
Let $x\in [0, 1]$ has $f_v$-digits $(\varrho_i)_{i\geq 1}$. Then
$$
\ikspm{n} =v + \sum\limits_{i=1}^{n}
(v-1)^{1+\sum\limits_{j=1}^{i} \varrho_j}\cdot
v^{i-\sum\limits_{j=1}^{i} \varrho_j}
$$
\end{lemma}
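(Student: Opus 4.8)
The plan is to take as starting point the intermediate form obtained from (iii) of Lemma~\ref{lema:5.1} together with Lemma~\ref{lema:4.1}, namely
\[
\ikspm{n} = v + \sum_{i=1}^{n} (-1)^{1+\dGt_i}\cdot \Rot^{1+\dGt_i}(\delta_{k_i})\cdot \lI_{i+1,k_i},
\qquad \delta_{k_i} = \Rot^{\dGt_i}(v),
\]
and then rewrite every factor in terms of the digit sequence $(\ChEps_j)_{j\geq 1}$. The first observation I would make is that $\Rot^{1+\dGt_i}(\Rot^{\dGt_i}(v)) = \Rot^{1+2\dGt_i}(v)$, and since $1+2\dGt_i$ is odd and $\Rot$ is an involution, this equals $\Rot(v) = 1-v$ for \emph{both} values of $\dGt_i$. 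Thus the middle factor collapses to the constant $1-v$ and loses all dependence on $\dGt_i$.

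Next I would translate the sign and the length into $\varrho$-language. By (ii) of Lemma~\ref{lema:3.1} we have $\dGt_i = \Rot^{\sum_{j=1}^{i}\ChEps_j}(0)$, so $\dGt_i$ is exactly the parity of $\sum_{j=1}^{i}\ChEps_j$; consequently $(-1)^{\dGt_i} = (-1)^{\sum_{j=1}^{i}\ChEps_j}$ and hence $(-1)^{1+\dGt_i} = (-1)^{1+\sum_{j=1}^{i}\ChEps_j}$. For the length I would invoke Theorem~\ref{th:4}, which gives $\lI_{i+1,k_i} = (1-v)^{\sum_{j=1}^{i}\varrho_j}\,v^{\,i-\sum_{j=1}^{i}\varrho_j}$.

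The decisive step is then to merge the sign with the powers of $1-v$: multiplying the collapsed factor $1-v$ by the length factor yields $(1-v)^{1+\sum_{j=1}^{i}\varrho_j}\,v^{\,i-\sum_{j=1}^{i}\varrho_j}$, and absorbing $(-1)^{1+\sum_{j=1}^{i}\varrho_j}$ into the base of the first power turns $(-1)^{1+\sum_{j=1}^{i}\varrho_j}(1-v)^{1+\sum_{j=1}^{i}\varrho_j}$ into $(v-1)^{1+\sum_{j=1}^{i}\varrho_j}$. Summing over $i$ produces exactly
\[
\ikspm{n} = v + \sum_{i=1}^{n} (v-1)^{1+\sum_{j=1}^{i}\varrho_j}\,v^{\,i-\sum_{j=1}^{i}\varrho_j},
\]
as required. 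I do not expect a genuine obstacle: the whole argument is parity bookkeeping, and the only points needing care are verifying that $1+2\dGt_i$ is always odd (so the $\Rot$-power is the constant $1-v$) and that $\dGt_i$ truly records the parity of the partial digit sum via Lemma~\ref{lema:3.1}; once these are in hand, the identification $(-1)^{1+\sum}(1-v)^{1+\sum} = (v-1)^{1+\sum}$ is purely formal.
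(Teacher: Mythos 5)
Your proposal is correct and takes essentially the same route as the paper's own proof: start from (iii) of Lemma~\ref{lema:5.1}, use Lemma~\ref{lema:4.1} to collapse $\Rot^{1+\dGt_i}(\Rot^{\dGt_i}(v))$ to the constant $1-v$, substitute Theorem~\ref{th:4} for $\lI_{i+1,k_i}$, and apply (ii) of Lemma~\ref{lema:3.1} to rewrite $(-1)^{1+\dGt_i}$ as $(-1)^{1+\sum_{j=1}^{i}\ChEps_j}$ before absorbing the sign into $(v-1)^{1+\sum_{j=1}^{i}\varrho_j}$. The only difference is cosmetic: you spell out the involution argument (that $1+2\dGt_i$ is odd) which the paper leaves implicit in its exponent bookkeeping.
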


\begin{proof}
By~(iii) of Lemma~\ref{lema:5.1}, $$ \ikspm{n} = v +
\sum\limits_{i=1}^{n} (-1)^{\dGt_i+1}\cdot
\Rot^{\dGt_i+1}(\delta_{i+1,k_i})\cdot \lI_{i+1}
\stackrel{\text{by Lem.~\ref{lema:4.1}}}{=}$$$$ =v +
\sum\limits_{i=1}^{n} (-1)^{\dGt_i+1}\cdot \Rot^{\dGt_i+1
+\dGt_i}(v)\cdot \lI_{i+1} \stackrel{\text{by Th.~\ref{th:4}}}{=}
$$$$=v + \sum\limits_{i=1}^{n} (-1)^{\dGt_i+1}\cdot (1-v)\cdot
(1-v)^{\sum\limits_{j=1}^{i} \varrho_j}\cdot
v^{i-\sum\limits_{j=1}^{i} \varrho_j} \stackrel{\text{by
Lem.~\ref{lema:3.1}}}{=}
$$$$
=v + \sum\limits_{i=1}^{n}
(-1)^{1+\Rot^{\sum\limits_{j=1}^i\ChEps_j}(0)}\cdot
(1-v)^{1+\sum\limits_{j=1}^{i} \varrho_j}\cdot
v^{i-\sum\limits_{j=1}^{i} \varrho_j}=
$$$$
=v + \sum\limits_{i=1}^{n} (v-1)^{1+\sum\limits_{j=1}^{i}
\varrho_j}\cdot v^{i-\sum\limits_{j=1}^{i} \varrho_j}
$$ and we are done.
\end{proof}

\begin{proof}[Proof of Theorem~\ref{th:3}]
Denote $a_n = \iks{n} + \dGt_{n+1}\cdot \lI_{n+1,k_n}$.

$$a_{n+1} = \iks{n+1} + \dGt_{n+2}\cdot \lI_{n+2,k_{n+1}}
\stackrel{\text{(i) of Lem.~\ref{lema:2.9}}}{=}$$
$$=\iks{n} +
\dGt_{n+1}\cdot \delta_{n+1,k_n}\cdot \lI_{n+1,k_n} +
\dGt_{n+2}\cdot \lI_{n+2,k_{n+1}} %
\stackrel{\text{(ii) of Rem.~\ref{rem:2.8}}}{=} $$ $$%
=\iks{n} + \dGt_{n+1}\cdot \delta_{n+1,k_n}\cdot \lI_{n+1,k_n} +
\dGt_{n+2}\cdot \Rot^{\dGt_{n+1}}(\delta_{n+1,k_n})\cdot
\lI_{n+1,k_n}=
$$$$
a_n + (\dGt_{n+1}\cdot \delta_{n+1,k_n} -\dGt_{n+1}
+\dGt_{n+2}\cdot \Rot^{\dGt_{n+1}}(\delta_{n+1,k_n}))\cdot
\lI_{n+1,k_n}
$$

If $\dGt_{n+1}=0$, then $$ a_{n+1} = a_n + \dGt_{n+2}\cdot
\delta_{n+1,k_n}\cdot \lI_{n+1,k_n}
$$

If $\dGt_{n+1} =1$, then $$ a_{n+1} = a_n + ( \delta_{n+1,k_n} -1
+\dGt_{n+2}\cdot \Rot(\delta_{n+1,k_n}))\cdot \lI_{n+1,k_n}=
$$$$= a_n + ( \dGt_{n+2}\cdot \Rot(\delta_{n+1,k_n})
-\Rot(\delta_{n+1,k_n}))\cdot \lI_{n+1,k_n}=
$$$$
=a_n -\Rot(\dGt_{n+2})\cdot \Rot(\delta_{n+1,k_n})\cdot
\lI_{n+1,k_n}
$$

In general,
$$
a_{n+1} = a_n + (-1)^{\dGt_{n+1}}\cdot
\Rot^{\dGt_{n+1}}(\dGt_{n+2}) \cdot
\Rot^{\dGt_{n+1}}(\delta_{n+1,k_n})\cdot \lI_{n+1,k_n}
\stackrel{\text{(ii) of Rem.~\ref{rem:2.8}}}{=}
$$$$=a_n + (-1)^{\dGt_{n+1}}\cdot \Rot^{\dGt_{n+1}}(\dGt_{n+2}) \cdot
\lI_{n+2,k_{n+1}}
$$

Since $a_0 =\dGt_1$, then for any $n\geq 1$ we have
$$
a_n = \dGt_1 +\sum\limits_{i=1}^{n} (-1)^{\dGt_i}\cdot
\Rot^{\dGt_i}(\dGt_{i+1}) \cdot \lI_{i+1,k_i}
\stackrel{\text{Lem.~\ref{lema:3.1}}}{=}
$$$$
=\ChEps_1 +\sum\limits_{i=1}^{n}
(-1)^{\sum\limits_{j=1}^i\ChEps_j}\cdot \ChEps_{i+1} \cdot
\lI_{i+1,k_i} = \ChEps_1 +\sum\limits_{i=2}^{n+1}
(-1)^{\sum\limits_{j=1}^{i-1}\ChEps_j}\cdot \ChEps_i \cdot
\lI_{i,k_{i-1}}.
$$
\end{proof}

\setlength{\unitlength}{1pt}

\pagestyle{empty}
\bibliography{Ds-Bib}{}

\begin{thebibliography}{10}

\bibitem{Birk-1927}
G.~D. Birkhoff,
{\it Dynamical Systems}, Colloquium Publications, AMS, 1927.

\bibitem{Milnor-Thurston}
J. Milnor and W. Thurston,
``On Iterated Maps of the Interval'',
{\it Dynamical Systems},
Vol. {\bf 1342}, pp.~465--563, 1988.

\bibitem{Ulam-1964-b}
P. Stein and S. Ulam,
``Non-linear transformation studies on electronic computers'',
{\it Rozprawy Mat.},
Vol. {\bf 39}, pp.~1-66, 1964.

\bibitem{Chaos}
M. Plakhotnyk,
``The Derivative of the Conjugacy Between Skew Tent Maps'',
{\it International Journal of Bifurcation and Chaos in Applied Sciences and
  Engineering},
Vol. {\bf 28}, No~9, pp.~1850107.1 -- 1850107.14, 2018.

\bibitem{Yong-Guo-Wang}
{Yong-Guo Shi} and {Zhihua Wang},
``Topological Conjugacy Between Skew Tent Maps'',
{\it International Journal of Bifurcation and Chaos},
Vol. {\bf 25}, No~9, pp.~1550118.1-1550118.9, 2015.

\bibitem{Jydy}
H. Proppe, W. Byers and A. Boyarsky,
``Singularity of topological conjugacies between certain unimodal maps of the
  interval'',
{\it Israel J. Math.},
Vol. {\bf 44}, No~4, pp.~277-288, 1983.

\bibitem{Skufca}
J. Skufca and E. Bollt,
``A concept of homeomorphic defect for defining mostly conjugate dynamical
  systems'',
{\it Chaos},
Vol. {\bf 18:03118}, pp.~1-18, 2008.

\bibitem{Plakh-Arx-Sao}
M. {Plakhotnyk},
``{On one generalization of skew tent maps}'',
{\it ArXiv}, eprint: 1809.01718, math.DS, sep. 2018.

\end{thebibliography}
\bibliographystyle{makar}


\end{document}